\theoremstyle{thmstyleone}%
\newtheorem{theorem}{Theorem}
\newtheorem{lemma}[theorem]{Lemma}
\newtheorem{proposition}[theorem]{Proposition}
\newtheorem{corollary}[theorem]{Corollary}%
\theoremstyle{thmstyletwo}%
\newtheorem{example}{Example}%
\newtheorem{remark}{Remark}%
\theoremstyle{thmstylethree}%
\newtheorem{definition}{Definition}%
\newtheorem{condition}{Condition}%
\begin{document}

\title[Selection between distribution tail models]{Location- and scale-free procedures for distinguishing between distribution tail models}


\author{\fnm{Igor} \sur{Rodionov}}\email{igor.rodionov@essex.ac.uk}

\affil{\orgdiv{School of Mathematics, Statistics and Actuarial Science}, \orgname{University of Essex}, \orgaddress{\street{Wivenhoe Park}, \city{Colchester}, \postcode{CO4 3SQ}, \country{United Kingdom}}}

\abstract{We consider distinguishing between two distribution tail models when tails of one model are lighter (or heavier) than those of the other. 
Two procedures are proposed: one scale-free and one location- and scale-free, and their asymptotic properties are established. We show the advantage of using these procedures for distinguishing between certain tail models in comparison with the tests proposed in the literature by simulation and apply them to data on daily precipitation in Green Bay, US and Saentis, Switzerland.}

\keywords{distribution tail, extremes, Gumbel maximum domain of attraction, hypothesis testing, model selection.}

\pacs[MSC Classification]{62G32}

\maketitle

\section{Introduction}


Most methods used in the statistical analysis of distribution tails are provided by extreme value theory and involve the estimation of parameters of one of the general distribution tail models. Applications of this theory are very diverse, from finance and insurance to environment and biology. The peaks-over-threshold (POT) approach based on the Generalized Pareto distribution (GP) model is typically used for estimating distribution tails
, whereas the Gumbel block maximum method and the approach based on the extremal index are usually applied for estimating the maxima of time series
(see, e.g., \cite{dehaan} or \cite{beirlantteugels}). However, whereas financial and insurance data are usually heavy-tailed and are well approximated by the GP model, the tails of environmental and biological data may be lighter.

The GP model and generalized extreme value (GEV) distributions have been widely applied to fit extremes of environmental data, see, e.g., \cite{koutsoyiannis} or \cite{serinaldi}. However, \cite{maraniignaccolo} and \cite{miniussi} showed that fitting time series maxima using the Weibull model outperforms the classical Gumbel block maximum method on rainfall and flood data, respectively. Moreover, Weibull-type and log-Weibull-type tail models have shown their efficiency in estimating extremes of spectrometric data \cite{gardesgirard}, lifetimes of cancer patients \cite{wormss} and even financial time series \cite{malevergne}.
Surprisingly, there is almost no work on distinguishing between these tail models. The present article is devoted to this problem.

Let $\overline{F}(x) = 1 - F(x)$ denote the tail distribution function of a cdf $F,$ and let $x^{\ast}_{F} := \sup\{y: F(y)<1\}$ denote its right endpoint. Following \cite{resnick2}, we say that $\overline{F}$ and $\overline{G}$ are strictly tail-equivalent, if $x^\ast_{F} = x^\ast_{G} =: x^\ast$ and $\overline{F}(x)/\overline{G}(x) \to 1$ as $x\uparrow x^{\ast}$ (write $\overline{F}\sim\overline{G}$). We call {\it the (right) distribution tail} the equivalence class $T(\overline{F})$ of tail distribution functions, i.e., for all $\overline{G}$ the property $\overline{G}\sim\overline{F}$ is equivalent to $\overline{G}\in T(\overline{F}).$ 

Let $\textbf{X}_n = (X_1, \ldots, X_n)$ be i.i.d. random variables with cdf $F$ and with $x_F^\ast = +\infty.$ Below we propose a location- and scale-free model selection procedure for distinguishing between two tail models $A_0$ and $A_1$ using $\textbf{X}_n.$ Here $A_0$ and $A_1$ are families of distribution tails that are ``separable'' by some cdf $F_0$; in particular, all tails from $A_0$ are lighter (or heavier) than those from $A_1.$ We also propose a scale-free procedure for distinguishing $A_0$ from $A_1,$ that may be more powerful than the first. Both procedures are extensions of those proposed in \cite{R1}, which are neither location- nor scale-free. 

The article is organized as follows. In Section \ref{recent}, we discuss Weibull-type and log-Weibull-type models for distribution tails. 
In Section \ref{ccondition} we introduce the notion of C-separability, and then scale-free and location- and scale-free model selection procedures for distinguishing between C-separable classes of distribution tails are proposed in Sections \ref{scale-free} and \ref{location-free}, respectively, and their asymptotic properties are established. These procedures can be used for distinguishing between regularly-varying, Weibull-type and log-Weibull-type tails, though their applicability is not limited to this. The selection of the optimal threshold level for distinguishing between distribution tail models is discussed in Section \ref{kselection}. In Section \ref{simulationstudy}, the performance of our methods is examined on simulated and real data sets and compared with methods in the literature. All proofs are postponed to Section \ref{proofs}.


\section{Recent models for distribution tails} \label{recent}

The classical approach to handling extremes presupposes that the distribution function $F$ of observations belongs to the maximum domain of attraction of some extreme value distribution, written $F \in \mathcal{D}(EV_\gamma)$, i.e., for some sequences $a_n>0$ and $b_n,$
\begin{equation}\label{extremevaluetheorem}F^n(a_n x + b_n) \to EV_\gamma(x), \quad n\to\infty,\end{equation} where
\[EV_\gamma(x) = \left\{\begin{array}{cc} \exp(-(1+\gamma x)^{-1/\gamma}),\; 1+\gamma x>0, & \mbox{ if } \gamma\neq 0,\\
\exp(-\exp(-x)),\; x\in\mathbb{R}, & \text{ if }\gamma=0,\end{array}\right.\] is the generalized extreme value distribution and $\gamma$ is the extreme value index. If $F \in \mathcal{D}(EV_\gamma),$ then as $u\uparrow x^\ast_F,$ the distribution of excesses over a high threshold $u$
\[F_u(x) = \frac{F(x+u) - F(u)}{1 - F(u)}, \quad 0 \le x \le x_F^\ast - u,\]
can be approximated by the Generalized Pareto distribution $GP_\gamma(x/\sigma(u)),$ where
\[GP_\gamma(x) = \begin{cases} 1 - (1 + \gamma x)^{-1/\gamma},\;1 + \gamma x>0, \; x>0, & \text{ if } \gamma \neq 0,\\
1 - \exp(-x), \; x>0, & \text{ if } \gamma = 0,\end{cases}\] according to \cite{balkema} and \cite{pickands}.

The efficiency of the GP model to fit distribution tails belonging to the Gumbel MDA (that is, if $\gamma=0$) is sometimes unsatisfactory, so several models for tails within the Gumbel MDA have been proposed in last two decades. First, consider the model developed in \cite{devalk1} (see also the particular case of this model proposed in \cite{elmethni}). Following the description of this model in \cite{girard2020}, assume that the distribution tail of random variable $X$ (let $X\ge 1$ a.s. for simplicity) is given by
\begin{equation}1-F(x) = \exp(-V^{\leftarrow}(\ln x)),\ x\ge 1,\label{cees}\end{equation} where $V^{\leftarrow}(x) := \inf\{y: V (y) \ge x\}$ is the generalized inverse of $V(x) = \ln u_F(1/\exp(x))$ with $u_F,$ the tail
quantile function of $F,$ that is $u_F(t) = \overline{F}^{\leftarrow}(1/t).$ It is also supposed that there exists a positive function $a,$ such that for all $t>0$ and some real $\vartheta,$
\begin{equation}\label{ceesV}\lim_{x\to\infty} \frac{V(tx) - V(x)}{a(x)} = \frac{t^\vartheta - 1}{\theta},\end{equation} where for $\vartheta=0$ the right-hand side should be understood as $\ln t.$ Hence, $V$ is supposed to be of extended regular variation with index $\vartheta,$ see p. 371 in \cite{dehaan}.
If $\vartheta>0$ then $\vartheta$ governs the tail behavior of the log-Weibull-type distribution, i.e., $F$ is such that
\begin{equation}
\label{deflogw}\lim_{x\to\infty} \frac{\ln (1 - F(e^{t x}))}{\ln (1 - F(e^x))} = t^{1/\vartheta}, \quad t>0.\end{equation} Representatives of this class are the lognormal distribution and distributions with cdf $F(x) = 1 - \exp(-(\ln x)^{1/\vartheta}),$ for $x>1.$ If $0<\vartheta<1,$ then $F$ belongs to the Gumbel MDA and if $\vartheta = 1,$ then $F$ belongs to the Fr\'echet MDA (see Theorem 1 in \cite{girard2020}). The estimation of $\vartheta,$ extreme quantiles and probabilities of rare events for this model are considered in \cite{devalk1}, \cite{girard2020}, \cite{devalk2}, and \cite{devalk3}.

Another class of distributions from the Gumbel MDA is the class of Weibull-type distributions \cite{Girard}, \cite{Goegebeur}. We say that $F$ is of Weibull type if there exists $\theta>0$ such that
\begin{equation}\label{defw}\lim_{x\to\infty} \frac{\ln (1 - F(t x))}{\ln (1 - F(x))} = t^{1/\theta}, \quad t>0.\end{equation}
Then $\theta$ is called the Weibull tail index. Representatives of this class include the normal, exponential, gamma and Weibull distributions. The model \eqref{cees} 
does not provide good description for Weibull-type distributions, since in \eqref{ceesV} the parameter $\theta$ is equal to 0 for all distributions from this class as well as for distributions with lighter tails. The problems of estimation of the Weibull tail index and extreme quantiles for Weibull-type distributions are investigated by \cite{elmethni}, \cite{Girard}, \cite{Bro}, \cite{Beirlant}, and \cite{Gardes}, among others.

The literature on distinguishing between the Gumbel and Fr\'echet MDA (i.e., if $\gamma>0$) is quite extensive \cite{hueslerpeng}, \cite{nevesfraga}, but there is an almost complete lack of work on distinguishing between distribution tails within the Gumbel MDA. Apart from \cite{R1}, we mention \cite{Goegebeur}, where the goodness-of-fit test for Weibull-type behavior was proposed, \cite{R2}, dealing with distinguishing between Weibull-type distribution tails, and rather theoretical paper \cite{R3}, where the close hypotheses about distribution tails within the Gumbel MDA are considered. 
The tests proposed in these four articles are not invariant with respect to scale and location changes, though the test of \cite{Goegebeur} is scale-free. However, poor estimation or incorrect setting of the location and scale parameters can lead to serious errors in conclusions (see, e.g., Figure \ref{fig:weibull2}), which motivates developing location- and scale-free alternatives.

\section{Asymptotic results} \label{mainresults}

Hereinafter we assume that all considered distributions are continuous and that their right endpoints are infinite. Our methods can be extended to finite right endpoints, but this is beyond the scope of our paper. The model selection procedures introduced in Sections \ref{scale-free} and \ref{location-free} are generalizations of those of \cite{R1}, which are neither location- nor scale-free.

Hereafter for model selection procedures considered below, we call the choice of an
alternative class for distributions from a null class the type I error. Also, we
call the probability of selecting an alternative class if a distribution belongs to
it the power of model selection procedure.

\subsection{$C$-condition and $C$-separability. Examples.}\label{ccondition}

Denote the tail quantile function of $F$ by $u_F(t),$ that is $u_F(t) = \inf\{x: F(x) = 1 - 1/t\}.$ Let us introduce the conditions that we use to establish the asymptotic properties of the tests proposed in this work.
\begin{definition}
\label{def1} Two cdfs $H$ and $G$ are said to satisfy the $C$-condition
$C_\delta(H,G)$, if for some $\delta\ge 0$ there exists $t_0$ such that for all $t>t_0$ and $c>1,$
\begin{equation}
\frac{u_H\big(c^{1+\delta}t\big)}{u_G(ct)} \le \frac{u_H(t)}{u_G(t)}.
\label{cond}%
\end{equation}
\end{definition}

If $H$ and $G$ satisfy the $C_0$-condition
$C_0(H,G)$, then starting from some $t_0,$ the ratio $u_H(t)/u_G(t)$ does not increase.
Clearly, $C_{\delta}(H, G)$ is weaker than $C_{\delta^\prime}(H, G)$ for $\delta^\prime>\delta\ge 0.$

It is easy to see that $C_\delta(H,G)$ is invariant with respect to changes of the scale parameters of $G$ and $H.$ However, these conditions may be violated if $\sigma_1u_H(t) + a_1$ and $\sigma_2u_H(t) + a_2$ are substituted for $u_H(t)$ and $u_G(t),$ respectively, but this does not prevent us from proving the results below.

The condition $C_\delta(H,G)$ implies the following relation, which we use in proving the main results.

\begin{proposition}\label{P1}
Assume that cdfs $H$ and $G$ satisfy $C_\delta(H,G)$ for some $\delta\ge 0$ and $t_0.$ Then for all $t>t_0$ and $c>1,$
    \begin{equation}\frac{\big(1 - H(c u_H(t))\big)^{1-\varepsilon}}{(1 - H(u_H(t)))^{1-\varepsilon}} \le \frac{1 - G(c u_G(t))}{1 - G(u_G(t))}, \label{B'}
    \end{equation} with $\varepsilon = 1 - (1+\delta)^{-1}.$
\end{proposition}

Now we introduce the notion of $C$-separability.
\begin{definition} \label{def2}
Let $A_0$ and $A_1$ be two classes of distribution tails such that the tails belonging to $A_0$ are lighter than those from $A_1.$ We call $A_0$ and $A_1$ {\tt
$C$-separable} (from the right) if there exists a cdf $F_0$ such that for all $G\in A_0$ the condition $C_0(G, F_0)$ holds for some $t_0 = t_0(G),$ and for all $H\in A_1$
the condition $C_\delta(F_0, H)$ holds for some $\delta>0$ and $t_0 = t_0(H).$

If classes $A_0$ and $A_1$ are $C$-separable from the right by cdf $F_0,$ we call them $C(F_0)$-separable from the right.
\end{definition}
If the distribution tails in $A_0$ are heavier then those in $A_1,$ then we say that these two classes are $C(F_0)$-separable (from the left), if for some cdf $F_0$ and all $G\in A_0$ the condition $C_0(F_0, G)$ holds for some $t_0 = t_0(G),$ and for all $H\in A_1$ the condition $C_\delta(H, F_0)$ holds for some $\delta>0$ and $t_0 = t_0(H).$

$C$-separability is satisfied for a wide range of classes of distribution tails, as is now discussed.

\begin{example}\label{E1} Let $A_0$ and $A_1$ be the classes of Weibull-type and log-Weibull-type distribution tails, respectively (see Section \ref{recent}), and suppose that all such functions with tails in $A_0$ and $A_1$ are eventually differentiable. To distinguish between these two classes we recommend selecting the cdf
\begin{equation}F_0(x) = (1 -
\exp\{-\exp\{b(\ln x)^{1/2}\}\}) I(x>1)\label{w-lw}\end{equation} for some constant $b>0.$ The proof of $C(F_0)$-separability of these classes from the right under some technical conditions is given in Section \ref{proofs}.
The proof of their $C(F_0)$-separability from the left is similar. Finite-sample properties of procedures for distinguishing $A_0$ from $A_1$ are compared in Section \ref{sim1}.
\end{example}

\begin{example}\label{E2} Let $A_0$ be the class of log-Weibull-type distribution tails with $\vartheta<1,$ and let $A_1$ be the class of regularly-varying distribution tails. Recall that the Fr\'echet MDA consists of only the distributions with regularly-varying tails, and write $F\in \mathcal{D}(EV_\gamma),\ \gamma>0$ for such distribution functions. Assume again that all distribution functions with tails in the classes $A_0$ and $A_1$ are eventually differentiable. To distinguish between these two classes we recommend selecting the cdf
\begin{equation}\label{lw-rv} F_0(x) = 1 - \exp(-\exp(b\,\sqrt{\ln\ln x}) \ln x), \quad x>e,
\end{equation} for some constant $b>0.$
Indeed, the tails of log-Weibull-type distributions with $\vartheta<1$ are lighter than that of $F_0,$ and the condition $C_0(G, F_0)$ for $G\in A_0$ follows from \eqref{deflogw} and the properties of regularly-varying functions. On the other hand, the condition $C_\delta(F_0, H)$ for $H\in \mathcal{D}(EV_\gamma), \gamma>0,$ and $\delta>0$ follows from \eqref{lw-rv} and the definition of regular variation. We do not provide the detailed proofs of these facts, because they are technical and in many respects similar to the proofs of corresponding facts in Example 1. $C(F_0)$-separability from the right holds in this case as well.
Numerical comparisons of performance of tests for distinguishing between these two classes are provided in Section \ref{sim2}.
\end{example}

\begin{example}\label{E3} Let the class $A_0$ be such that $u_G(t) = \int_{1}^t f(x)/x\, dx$ for all $G\in A_0,$ where $f$ is positive and $f(x)\to \infty$ as $x\to\infty.$ It is clear that $G$ is heavy tailed. Next, let $A_1$ be the class of the tails of absolutely continuous Weibull-type distributions with $\theta<1$. Then the classes $A_0$ and $A_1$ are $C(F_0)$-separable from the left by $F_0,$ the cdf of the standard exponential distribution. We omit the proof of this fact, since it is similar to the proof of Example 1.
\end{example}

\begin{example}\label{E4} Consider a one-parameter family of distributions
 $F\in \{G_\theta, \theta\in\Theta\}$ such that for all $\theta_1,\theta_2\in \Theta,$ $\theta_1<\theta_2,$ the cdfs $G_{\theta_1}$ and $G_{\theta_2}$ satisfy the condition $C_\delta(G_{\theta_1}, G_{\theta_2})$ for $\delta>0.$
 Then it is easy to see that the classes $A_0 = \{G_\theta,\ \theta< \theta_0\}$ and $A_1 = \{G_\theta,\ \theta> \theta_0\}$ are $C(G_{\theta_0})$-separable from both the right and the left.
 \end{example}

\subsection{Scale-free model selection} \label{scale-free}

Assume that classes $A_0$ and $A_1$ are $C(F_0)$-separable from the right by some cdf $F_0$. In this section, we discuss the following procedure for distinguishing $A_0$ from $A_1:$
\begin{equation}\text{select } \begin{cases}A_0, & \text{ if } \tilde{R}_{k,n} \le 1+ \frac{u_{1-\alpha}}{\sqrt{k}},\\ A_1, & \text{ otherwise,} \end{cases}\label{test}\end{equation}
where $u_{1-\alpha}$ denotes the $(1-\alpha)-$quantile of the standard normal distribution,
\begin{equation}
\tilde{R}_{k,n} = \ln\frac{k}{n} -
\frac{1}{k}\sum_{i=n-k+1}^{n}\ln\big(1-F_0(u_0(n/k)X_{(i)}/X_{(n-k)})\big),
\label{rnk1} \end{equation} and $u_0(t) = u_{F_0}(t)$. Here $\alpha$ is the asymptotic upper bound on the probabilities of selecting $A_1$ for distribution tails in $A_0$ obtained when applying \eqref{test} to distinguish between  classes $A_0$ and $A_1$; it can be considered as analogous to a significance level.

The statistic $\tilde R_{k,n}$ is obtained by replacing the order statistics $\{X_{(i)}\}_{i=n-k}^n$ with their scale-free counterparts $\{u_0(n/k)X_{(i)}/X_{(n-k)}\}_{i=n-k}^n$ from the statistic \begin{equation}\label{initialrkn} 
R_{k,n} = \ln(1 - F_0(X_{(n-k)})) - \frac{1}{k} \sum_{i=n-k+1}^n \ln(1 - F_0(X_{(i)}))   
\end{equation} 
introduced in \cite{R1} and inspired by the Hill estimator \cite{hill} of the positive extreme value index $\gamma$. The statistic $R_{k,n}$ has fruitful distributional properties allowing to propose a procedure similar to \eqref{test}. Specifically, if $F = F_0,$ the distribution of $R_{k,n}$ is known, whereas if the right tail of $F$ is heavier or lighter than that of $F_0$ then $R_{k,n}$ tends in probability to $+\infty$ or $-\infty,$ respectively, as $k = k(n) \to \infty, k/n \to 0, n\to\infty.$ However, the result of applying this procedure to finite samples might depend strongly on changes in the scale parameters of $F$ and $F_0,$ which is a significant drawback of the method.

Unlike $R_{k,n},$ the distribution of
$\tilde{R}_{k,n}$ does not depend on the scale parameters of $F_0$ and $F,$ so the procedure \eqref{test} is scale-free. However, $\tilde{R}_{k,n}$ preserves distributional properties similar to those of $R_{k,n},$ see Theorem \ref{T1} and Theorem \ref{T2} below. If for some cdf $F_0$ classes $A_0$ and $A_1$ are $C(F_0)$-separable from the left, then 
we use the following procedure to distinguish $A_0$ from $A_1:$ 
\[\text{select } \begin{cases}A_0, & \text{ if } \tilde{R}_{k,n} \ge 1+ \frac{u_{\alpha}}{\sqrt{k}},\\ A_1, & \text{ otherwise,} \end{cases}\]
where $\alpha$ has the same meaning as above.

Now let us discuss the asymptotic properties of the procedure \eqref{test}. To formulate them, we need the following condition, which is classical for extreme value theory, see, e.g., formula (1.1.30) in \cite{dehaan}.

\begin{condition}\label{Cond1} We say that a cdf $F$ satisfies the von Mises condition if for some $\gamma\in \mathbb{R}$
\begin{eqnarray} \label{mises}
 \lim_{x \uparrow x^\ast_F} \frac{(1-F(x))F^{\prime\prime}(x)}{(F^\prime(x))^2} = -\gamma-1.
\end{eqnarray}
\end{condition}
It is known (see, e.g., Theorem 1.1.8 ibid.) that the von Mises condition is sufficient for $F$ to belong to $\mathcal{D}(EV_\gamma).$ Hereinafter we call an natural sequence $k = k(n)$ satisfying 
\begin{equation}\label{kn} k\to\infty, k/n\to 0 \mbox{ as } n\to\infty,
\end{equation}
an intermediate sequence, which is also classical for statistics of extremes.

The first result gives the behavior of the statistic $\tilde{R}_{k,n}$ if a cdf $F$ of a sample $X_1,\ldots, X_n$ coincides with $F_0.$

\begin{theorem} \label{T1} Let $X_1,\ldots, X_n$ be i.i.d. random variables with a cdf $F_0$ 
satisfying Condition \ref{Cond1}. 
If a sequence $k = k(n)$ is intermediate,
then
\[
\sqrt{k}(\tilde{R}_{k,n} - 1) \stackrel{d}{\rightarrow} N(0,
1), \qquad n\to\infty.
\]
\end{theorem}

Two further results together with Theorem \ref{T1} imply that the upper bound over the type I error probabilities of the procedure \eqref{test} for distribution tails from $A_0$ is at most $\alpha$ asymptotically, and the power of \eqref{test} tends to $1$ as $n\to\infty$.

\begin{theorem} \label{T2} Let $X_1,\ldots, X_n$ be i.i.d. random variables with a cdf $F_1$. Assume $F_0$ satisfies Condition \ref{Cond1} and that a sequence $k = k(n)$ is intermediate. If the condition $C_\delta(F_0, F_1)$ holds for some $\delta>0$, then
$$\sqrt{k}(\tilde{R}_{k,n} - 1) \xrightarrow{P} +\infty, \qquad n\to\infty,$$
and if $C_\delta(F_1, F_0)$ holds for some $\delta>0$, then
$$\sqrt{k}(\tilde{R}_{k,n} - 1) \xrightarrow{P} -\infty, \qquad n\to\infty.$$
\end{theorem}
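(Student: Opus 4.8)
The plan is to reduce the consistency statement to a careful analysis of the summands in $\tilde R_{k,n}$, exploiting the $C$-condition through Proposition 1. Write $Y_i = u_0(n/k)X_{(i)}/X_{(n-k)}$ for $i \ge n-k+1$. First I would recall from the proof of Theorem \ref{T1} that when the data come from $F_0$ itself, the key asymptotic fact is $X_{(n-k)}/u_0(n/k) \xrightarrow{P} 1$ (this follows from \eqref{mises} and \eqref{kn}), and that the terms $-\ln(1-F_0(u_0(n/k)X_{(i)}/X_{(n-k)}))$ behave like i.i.d.\ exponential-type variables with mean $\ln(n/k)+1+o(1)$, giving $\tilde R_{k,n}\to 1$. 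Under $H_1$, the sample is from $F_1$, so I would first establish the analogue $X_{(n-k)}/u_1(n/k)\xrightarrow{P}1$, where $u_1=u_{F_1}$; this is a standard consequence of the order-statistic quantile approximation and does not require $F_1$ to lie in any MDA, only continuity and $x^\ast_{F_1}=\infty$.

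Next I would insert this asymptotic equivalence into $\tilde R_{k,n}$: up to terms that are $o_P(1/\sqrt k)$ (which I would justify by the same uniform-order-statistic arguments as in Theorem \ref{T1}),
\[
\tilde R_{k,n} \approx \ln\frac{k}{n} - \frac{1}{k}\sum_{i=n-k+1}^{n}\ln\!\Big(1-F_0\Big(\tfrac{u_0(n/k)}{u_1(n/k)}\,u_1(n/k)\,\tfrac{X_{(i)}}{X_{(n-k)}}\Big)\Big),
\]
and $X_{(i)}/X_{(n-k)}$ for $i=n-k+1,\dots,n$ behaves like the order statistics of $k$ observations scaled by $u_1$, i.e.\ like $u_1(n/(k V_{(j)}))/u_1(n/k)$ with $V_{(j)}$ uniform order statistics. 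The heart of the matter is then to show the deterministic quantity
\[
\ln\frac{k}{n} - \frac{1}{k}\sum_{i=n-k+1}^{n}\ln\bigl(1-F_0(c_i\,u_0(n/k))\bigr), \qquad c_i:=\frac{u_1(n/k)}{u_0(n/k)}\cdot\frac{X_{(i)}}{X_{(n-k)}},
\]
diverges to $+\infty$ faster than $1/\sqrt k$ (after multiplication by $\sqrt k$). Here is where Proposition 1 enters: applying \eqref{B'} with $H=F_0$, $G=F_1$ and the roles arranged so that the $C_\delta(F_0,F_1)$ hypothesis applies, one gets a pointwise lower bound
\[
-\ln\bigl(1-F_0(c\,u_0(t))\bigr) \ge (1-\varepsilon)^{-1}\Bigl(-\ln\bigl(1-F_1(c\,u_1(t))\bigr)\Bigr) - \text{const},
\]
with $\varepsilon = 1-(1+\delta)^{-1}>0$, valid for $t$ large and $c$ in the relevant range. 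Summing this over $i$ and using that $-\frac1k\sum \ln(1-F_1(c_i u_1(n/k)))$ is, by the $F_1$-version of the computation in Theorem \ref{T1}, approximately $\ln(n/k)+1$, I obtain that $\tilde R_{k,n}$ is asymptotically bounded below by $(1+\delta)(\ln(n/k)+1) - \ln(n/k) + o_P(1) = 1 + \delta\ln(n/k) + o_P(1)$, and since $\ln(n/k)\to\infty$ under \eqref{kn}, multiplication by $\sqrt k$ gives $\sqrt k(\tilde R_{k,n}-1)\xrightarrow{P}+\infty$. The case $C_\delta(F_1,F_0)$ is symmetric: reversing the roles in Proposition 1 yields the reverse inequality, $\tilde R_{k,n} \le 1 - \delta'\ln(n/k) + o_P(1)$ for some $\delta'>0$, hence divergence to $-\infty$.

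The main obstacle I anticipate is controlling the error terms uniformly over the $k$ order statistics $X_{(n-k+1)},\dots,X_{(n)}$ — in particular making rigorous that replacing $X_{(n-k)}$ by $u_1(n/k)$ and replacing the random $c_i$'s by their uniform-order-statistic surrogates costs only $o_P(1/\sqrt k)$ in $\tilde R_{k,n}$, and that the top few order statistics (where $c_i$ can be large and the von Mises control on $F_0$ is most delicate) do not contribute a non-negligible amount. This requires combining the Rényi representation for the $X_{(i)}$, the von Mises condition \eqref{mises} to get a Taylor-type expansion of $\ln(1-F_0(\cdot))$ near infinity, and the monotonicity built into the $C_0$/$C_\delta$ conditions to dominate the sum; since Theorem \ref{T1} already carries out the analogous bookkeeping in the null case, the same machinery should apply here, with the $C$-condition providing the extra one-sided slack that forces divergence rather than convergence.
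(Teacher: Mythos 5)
Your strategy is essentially the paper's: reduce to the exceedances over $X_{(n-k)}$, use Proposition 1 to transfer each summand from the $F_0$-quantile scale to the $F_1$-quantile scale, and conclude via a CLT for the $F_1$-transformed sum. The paper packages the termwise inequality as a stochastic domination statement, $P(Y_1\le x)\le 1-e^{-(1-\varepsilon)x}+O(1/\sqrt{k})$ uniformly in $x$, so that $Y_1$ dominates a variable with mean $(1-\varepsilon_1)^{-1}>1$; and it resolves exactly the uniformity issue you flag (replacing $X_{(n-k)}$ by the deterministic quantile) with a dedicated result, Lemma \ref{L1}, which gives the $O_P(1/\sqrt{k})$ error uniformly in $c>1$ via the delta method and a Potter-type bound.

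One quantitative point needs correcting. The ``const'' in your pointwise bound $-\ln\bigl(1-F_0(c\,u_0(t))\bigr)\ge(1-\varepsilon)^{-1}\bigl(-\ln(1-F_1(c\,u_1(t)))\bigr)-\text{const}$ is not bounded: since $-\ln(1-F_0(u_0))=-\ln(1-F_1(u_1))=\ln(n/k)$, that term equals $\bigl((1-\varepsilon)^{-1}-1\bigr)\ln(n/k)=\delta\ln(n/k)$. Carrying this through the average, the $\delta\ln(n/k)$ contributions cancel and your argument yields $\tilde R_{k,n}\ge 1+\delta+o_P(1)$, not $1+\delta\ln(n/k)+o_P(1)$. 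The conclusion $\sqrt{k}(\tilde R_{k,n}-1)\xrightarrow{P}+\infty$ still follows, because $\sqrt{k}\,\delta\to\infty$, and this constant-gap lower bound is precisely what the paper obtains (divergence at rate $\sqrt{k}$, not $\sqrt{k}\ln(n/k)$). A smaller slip: your $c_i$ should be $X_{(i)}/X_{(n-k)}$ without the factor $u_1(n/k)/u_0(n/k)$, so that $c_i\,u_0(n/k)$ is the actual argument of $F_0$ appearing in $\tilde R_{k,n}$ and $c_i\,u_1(n/k)\approx X_{(i)}$, which is what your subsequent appeal to the $F_1$-version of the R\'enyi computation requires.
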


\begin{corollary}\label{C1}
Under the assumptions of Theorem \ref{T2}, let $X_1^0,\ldots, X_n^0$ be i.i.d. random variables with cdf $F_0.$ Denote the values of the statistic \eqref{rnk1} built by samples $(X_1^0,\ldots, X_n^0)$ and $(X_1,\ldots, X_n)$ by $\tilde{R}_{k,n}^0$ and $\tilde{R}_{k,n}^1,$ respectively. If $C_0(F_0, F_1)$ holds, then for all $x>0$ eventually
\[P(\tilde{R}_{k,n}^0 >x) \le P(\tilde{R}_{k,n}^1 >x),
\]
but if $C_0(F_1, F_0)$ holds, then for all $x>0$ eventually
\[P(\tilde{R}_{k,n}^0 >x) \ge P(\tilde{R}_{k,n}^1 >x).
\]
\end{corollary}


\begin{remark}\label{R1} Although procedure \eqref{test} possesses the asymptotic properties mentioned above for all $F_0$ satisfying Condition \ref{Cond1} and such that classes
$A_0$ and $A_1$ are $C(F_0)$-separable, the choice of $F_0$ matters for finite $n$. However, the problem of optimal selection of $F_0$ is beyond the scope of this paper. This remark is related also to the procedure proposed in the next section. However, some practical recommendations can be given; see Section \ref{pracrec}.
\end{remark}

\begin{remark}\label{R2}
 It follows from the results of this section that if classes $A_0$ and $A_1$ are $C(F_0)$-separable 
and $F_0$ does not belong to $A_0$, then 
the type I error probabilities of \eqref{test} when applying it to distinguish $A_0$ from $A_1$ are asymptotically lower than $\alpha$. Thus, if $A_0$ has a natural ``left bound'' $\tilde F,$ i.e., for all $G\in A_0$ the condition $C_0(G, \tilde F)$ holds, but $C_\delta(G, \tilde F)$ does not necessary hold for any $\delta>0$, and then we recommend selecting $F_0 = \tilde F$ to maximize the power of the procedure.
\end{remark}

\subsection{Location- and scale-free model selection} \label{location-free}
The procedure proposed in the previous section and the tests mentioned at the end of Section \ref{recent} are not invariant with respect to the location parameter, which can restrict their application in some situations; see, e.g., Fig. \ref{fig:weibull2}.
 In this section, we propose a procedure for distinguishing between separable classes of distribution tails that is invariant with respect to both scale and location parameters. Let us consider the statistic
\begin{eqnarray}
\label{rnk2} &&\widehat{R}_{k,n} = \ln\frac{k}{n} - \\&&
\frac{1}{k}\sum_{i=n-k+1}^{n}\ln\left[1-F_0\Big(u_0(n/k) + \frac{X_{(i)} - X_{(n-k)}}{X_{(n-k)} - X_{(n-2k)}}\big(u_0(n/k) - u_0(n/(2k))\big)\Big)\right],
\nonumber \end{eqnarray}
which can be obtained from $R_{k,n}$ \eqref{initialrkn} by replacing the order statistics $\{X_{(i)}\}_{i=n-k}^n$ with their location- and scale-free counterparts
\[u_0(n/k) + \frac{X_{(i)} - X_{(n-k)}}{X_{(n-k)} - X_{(n-2k)}}\big(u_0(n/k) - u_0(n/(2k)), \quad i = n-k, \ldots, n.\]
The asymptotic properties of $\widehat{R}_{k,n}$ are stated below.

\begin{theorem}
\label{T3} Under the conditions of Theorem \ref{T1}, 
\[
\sqrt{k}(\widehat{R}_{k,n} - 1) \stackrel{d}{\rightarrow} N\Big(0,
1 + \frac{\gamma^2}{2(\gamma+1)^2(2^\gamma-1)^2}\Big), \qquad n\to\infty,
\]
where the ratio on the right-hand side should be understood as $1/(2(\ln 2)^2)$ for $\gamma=0.$
\end{theorem}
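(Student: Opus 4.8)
The plan is to reduce the statement to the asymptotic behavior already available for the scale-free statistic $\tilde R_{k,n}$ in Theorem \ref{T1}, and then to quantify the extra randomness introduced by replacing the deterministic normalization by the data-driven affine map built from the three order statistics $X_{(n-k)}, X_{(n-2k)}$. First I would write, for $i>n-k$,
\[
Y_{i,n} := u_0(n/k) + \frac{X_{(i)}-X_{(n-k)}}{X_{(n-k)}-X_{(n-2k)}}\bigl(u_0(n/k)-u_0(n/(2k))\bigr),
\]
and compare this with the quantity $u_0(n/k)\,X_{(i)}/X_{(n-k)}$ appearing in $\tilde R_{k,n}$. Under the von Mises condition \eqref{mises} and \eqref{kn}, the classical intermediate–order–statistic expansions give $X_{(n-k)} = u_0(n/k)(1+o_P(1))$, and, more precisely, a Taylor/Rényi representation of the normalized upper order statistics in terms of i.i.d.\ exponential spacings, so that $u_0(n/k)-u_0(n/(2k))$ and $X_{(n-k)}-X_{(n-2k)}$ are, after scaling by $a(n/k):=u_0'(n/k)\cdot(n/k)$ (the auxiliary von Mises function), asymptotically constants $\tfrac{1-2^{-\gamma}}{\gamma}$ and $\tfrac{1-2^{-\gamma}}{\gamma}(1+o_P(1))$ respectively (with the logarithmic reading for $\gamma=0$).

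The next step is the linearization. Writing $\widehat R_{k,n}-1 = (\tilde R_{k,n}-1) + \Delta_{k,n}$, I would expand $\ln[1-F_0(Y_{i,n})]$ around its value at the "ideal" argument using the von Mises condition, which controls $\frac{d}{dx}\ln\overline{F_0}(x) = -F_0'(x)/\overline{F_0}(x)$ and its regular variation. The key point is that each summand's perturbation is governed by the common random factor
\[
\xi_{k,n} := \frac{X_{(n-k)}-X_{(n-2k)}}{u_0(n/k)-u_0(n/(2k))} - 1,
\]
and, after averaging over $i=n-k+1,\dots,n$ and using that $\tfrac1k\sum \ln\overline{F_0}(u_0(n/k)X_{(i)}/X_{(n-k)})$ is itself asymptotically a sum of exponentials, one obtains $\sqrt{k}\,\Delta_{k,n} = c_\gamma\,\sqrt{k}\,\xi_{k,n} + o_P(1)$ for an explicit constant $c_\gamma$. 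A Rényi-representation computation of $\sqrt{k}\,\xi_{k,n}$ shows it is asymptotically Gaussian and, crucially, asymptotically independent of $\sqrt{k}(\tilde R_{k,n}-1)$ because it depends (to first order) only on the two spacings near rank $n-k$ and $n-2k$, whereas $\tilde R_{k,n}$ depends on the average of the top-$k$ spacings; the covariance vanishes in the limit. Collecting the two independent Gaussian contributions yields variance $1 + c_\gamma^2\,\mathrm{Var}(\text{limit of }\sqrt k\,\xi_{k,n})$, which I would match to $1+\frac{\gamma^2}{2(\gamma+1)^2(2^\gamma-1)^2}$, equal to $1+\frac{1}{2(\ln 2)^2}$ at $\gamma=0$.

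The main obstacle I anticipate is making the linearization of $\ln[1-F_0(Y_{i,n})]$ uniform in $i$ over the top $k$ order statistics: the argument $Y_{i,n}$ ranges up to the sample maximum, where $\overline{F_0}$ is tiny and the von Mises ratio must be used carefully to ensure the remainder terms are $o_P(1/\sqrt k)$ after summation. I would handle this by splitting the sum at, say, $i \le n - k + \lfloor\log k\rfloor$ versus the remaining $O(\log k)$ extreme terms, bounding the latter crudely using $\overline{F_0}\in\mathcal D(EV_\gamma)$ and Potter-type bounds, and applying a uniform Taylor expansion with the auxiliary function $a(\cdot)$ of regular variation on the bulk. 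The computation of the limiting covariance between $\sqrt k\,\xi_{k,n}$ and $\sqrt k(\tilde R_{k,n}-1)$ via the exponential (Rényi) representation is the other technical point, but it is a routine, if lengthy, second-moment calculation once the representation is in place.
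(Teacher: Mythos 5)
Your overall architecture is essentially the paper's: isolate the extra randomness contributed by the two anchor order statistics $X_{(n-k)},X_{(n-2k)}$, show that this contribution is asymptotically Gaussian at rate $\sqrt k$, prove its asymptotic independence from the top-$k$ exponential spacings via the R\'enyi representation, and add the two variances. (The paper does this by conditioning on $(X_{(n-k)},X_{(n-2k)})$, computing the conditional mean $f(q,\hat q)$ of $\widehat R_{k,n}-R_{k,n}$, applying a bivariate delta method with the joint limit law of $(E_{(k)},E_{(2k)})$ from Lemma~\ref{L2}, and killing the conditional variance by Chebyshev; your split-the-sum/Potter-bound treatment of the remainder is a plausible substitute.) However, there is a genuine error at the step you yourself flag as the key point: for $\gamma>0$ the first-order perturbation of $\widehat R_{k,n}$ relative to $\tilde R_{k,n}$ is \emph{not} of the form $c_\gamma\,\xi_{k,n}+o_P(1/\sqrt k)$ with $\xi_{k,n}=\frac{X_{(n-k)}-X_{(n-2k)}}{u_0(n/k)-u_0(n/(2k))}-1$. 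Write $u_0=u_0(n/k)$, $\hat u_0=u_0(n/(2k))$, $\epsilon_1=X_{(n-k)}-u_0$, $\epsilon_2=X_{(n-2k)}-\hat u_0$. Linearizing, the argument inside $\overline F_0$ in $\tilde R_{k,n}$ is $X_{(i)}-\epsilon_1-(X_{(i)}-u_0)\,\epsilon_1/u_0+\cdots$, while in $\widehat R_{k,n}$ it is $X_{(i)}-\epsilon_1-(X_{(i)}-u_0)\,\xi_{k,n}+\cdots$, so the common random factor driving $\widehat R_{k,n}-\tilde R_{k,n}$ is $\eta_{k,n}:=\xi_{k,n}-\epsilon_1/u_0=\bigl(\hat u_0\epsilon_1-u_0\epsilon_2\bigr)/\bigl(u_0(u_0-\hat u_0)\bigr)$ — precisely the gradient direction $(-\hat u_0,\,u_0)$ that appears in the paper's computation of $f_x',f_y'$. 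Since $a(n/k)/u_0(n/k)\to\gamma$, the term $\epsilon_1/u_0$ is of the exact order $1/\sqrt k$ when $\gamma>0$ and cannot be absorbed into the remainder.

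Concretely, with the limit of Lemma~\ref{L2} one has $\sqrt k\,\eta_{k,n}\Rightarrow \frac{\gamma 2^{-\gamma}}{1-2^{-\gamma}}(\zeta_1-\tfrac12\zeta_2)$ while $\sqrt k\,\xi_{k,n}\Rightarrow \frac{\gamma}{1-2^{-\gamma}}(\zeta_1-2^{-\gamma-1}\zeta_2)$, where $(\zeta_1,\zeta_2)$ is centred Gaussian with covariance $\bigl(\begin{smallmatrix}1&1\\1&2\end{smallmatrix}\bigr)$; these two linear combinations are proportional only when $\gamma=0$. Hence no deterministic $c_\gamma$ makes $\sqrt k\,\Delta_{k,n}-c_\gamma\sqrt k\,\xi_{k,n}$ vanish in probability for $\gamma>0$, and the variance you would obtain from $\xi_{k,n}$ does not match the theorem: e.g.\ at $\gamma=1$ the correct extra variance is $\gamma^2/\bigl(2(\gamma+1)^2(2^\gamma-1)^2\bigr)=1/8$, whereas the $\xi_{k,n}$-route with the linearization constant $c_1=-1/2$ yields $\tfrac14\cdot\tfrac58=\tfrac58$. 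The discrepancy disappears at $\gamma=0$ (because $a(t)/u_0(t)\to0$ there), which is presumably why the error is easy to overlook; but as stated the ``matching'' step of your proof would fail for every $\gamma>0$. Replacing $\xi_{k,n}$ by $\eta_{k,n}$ (equivalently, carrying the full bivariate delta method in $(\epsilon_1,\epsilon_2)$ as the paper does) repairs the argument; the independence claim and the final addition of variances are then correct.
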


\begin{theorem} \label{T4} Assume the conditions of Theorem \ref{T2}. If the condition $C_\delta(F_0, F_1)$ holds for some $\delta>0$ then
$$\sqrt{k}(\widehat{R}_{k,n} - 1) \xrightarrow{P} +\infty,\qquad n\to\infty,$$ and if $C_\delta(F_1, F_0)$ holds for some $\delta>0$ then
$$\sqrt{k}(\widehat{R}_{k,n} - 1) \xrightarrow{P} -\infty,\qquad n\to\infty.$$
\end{theorem}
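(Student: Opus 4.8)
The plan is to mirror the consistency argument for the scale-free statistic in Theorem~\ref{T2}, adding a layer of work to accommodate the data-driven location correction carried by $\widehat R_{k,n}$. I will prove the first assertion (case $C_\delta(F_0,F_1)$, divergence to $+\infty$); the second follows by interchanging the roles of $F_0$ and $F_1$ and reversing inequalities, exactly as in Theorem~\ref{T2}. Write $u_1=u_{F_1}$.

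First I would put the statistic in a form suited to order-statistics asymptotics. Set $\widehat\sigma_n=\big(u_0(n/k)-u_0(n/(2k))\big)/\big(X_{(n-k)}-X_{(n-2k)}\big)$ and $\widehat a_n=u_0(n/k)-\widehat\sigma_n X_{(n-k)}$, so that the argument of $1-F_0$ in \eqref{rnk2} is $Y_{(i)}:=\widehat\sigma_n X_{(i)}+\widehat a_n$, i.e.\ the image of $X_{(i)}$ under the unique affine map sending $X_{(n-2k)}\mapsto u_0(n/(2k))$, $X_{(n-k)}\mapsto u_0(n/k)$, and $\widehat R_{k,n}=\ln(k/n)-\tfrac1k\sum_{i=n-k+1}^{n}\ln\overline{F_0}(Y_{(i)})$. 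Using the quantile/R\'enyi representation of order statistics and \eqref{kn}, I would establish $X_{(n-k)}=u_1(n/k)(1+o_P(1))$, $X_{(n-2k)}=u_1(n/(2k))(1+o_P(1))$, $X_{(n-k)}-X_{(n-2k)}=\big(u_1(n/k)-u_1(n/(2k))\big)(1+o_P(1))$, and, uniformly over $i$ with $n-i\in[\eta k,(1-\eta)k]$ ($\eta\in(0,1/2)$ fixed), $X_{(i)}=u_1\!\big(\tfrac{n}{\,n-i\,}\big)(1+o_P(1))$; hence $\widehat\sigma_n,\widehat a_n$ concentrate around their deterministic counterparts built from $u_1$. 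Alongside this I would record the consequences of the von Mises condition \eqref{mises} for $F_0$: the auxiliary function $a_0(t)=1/\big(tF_0'(u_0(t))\big)$, and the locally uniform limits $\big(u_0(\lambda t)-u_0(t)\big)/a_0(t)\to(\lambda^{\gamma}-1)/\gamma$ and $\overline{F_0}\big(u_0(t)+a_0(t)w\big)/\overline{F_0}(u_0(t))\to(1+\gamma w)^{-1/\gamma}$ (read as $\ln\lambda$, $e^{-w}$ when $\gamma=0$).

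The decisive step is to bound $\overline{F_0}(Y_{(i)})$ from above on a positive fraction of the indices. Feeding $C_\delta(F_0,F_1)$ — in the equivalent tail form \eqref{B'}, or directly via $u_1(ct)\ge u_1(t)\,u_0(c^{1+\delta}t)/u_0(t)$ for $c>1$ and $t$ large — into the asymptotics above, I would show that for each fixed $\eta>0$ there is $\rho_n>0$ with $\sqrt k\,\rho_n\to\infty$ such that, with probability tending to one,
\[
-\frac1k\sum_{i=n-k+1}^{n}\ln\overline{F_0}(Y_{(i)})\ \ge\ \ln\frac nk+1+\rho_n ,
\]
which with the identity above gives $\sqrt k(\widehat R_{k,n}-1)\ge\sqrt k\,\rho_n(1+o_P(1))\to+\infty$. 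The point behind this bound is that the affine calibration $x\mapsto\widehat\sigma_n x+\widehat a_n$, although it matches $F_0$ exactly at the two anchor quantiles $u_0(n/k)$ and $u_0(n/(2k))$, cannot flatten the calibrated $F_1$-tail down to that of $F_0$: because $C_\delta$ with $\delta>0$ makes $u_1$ grow strictly faster than $u_0$ in the relevant sense, a strictly super-linear power of $(n-i)/n$ survives in $\overline{F_0}(Y_{(i)})$ for the bulk indices. Concretely one writes $i=n-k+\lfloor tk\rfloor$, reduces the bulk sum to an integral over $t\in(0,1)$ of quantities of the type $-\ln\overline{F_0}\big(u_0(n/k)+\widehat\sigma_n\big(u_1(\tfrac{n}{k(1-t)})-u_1(n/k)\big)\big)$, and checks that the heavy-tail margin produced by $\delta>0$ strictly exceeds the ``null'' value $1$; the uppermost $o(k)$ order statistics contribute $o_P(1)$ after the $1/k$ normalisation by a crude monotonicity bound, and the lowest $\eta k$ of them contribute $O(\eta)$. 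It is natural to split this verification into the case $\gamma>0$ (where $a_0(t)\asymp u_0(t)$ and $\rho_n$ may be taken a fixed positive constant) and the case $\gamma=0$ (where $u_0$ is slowly varying, $a_0(n/k)\to0$, and the surviving margin $\rho_n$ is itself vanishing and must be extracted from the $\Pi$-variation of $u_0$).

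The hard part will be exactly this last step. For $\widehat R_{k,n}$'s scale-free precursor $\widetilde R_{k,n}$ the pure-scaling calibration is neutralised by the scale-invariance of the $C$-condition, but $\widehat R_{k,n}$ carries a genuine location shift $\widehat a_n$, and $C_\delta$ is not location-invariant (as remarked after Definition~\ref{def1}). Showing that the divergence nonetheless persists after the affine transformation — quantitatively, that $\sqrt k\,\rho_n\to\infty$ with the correct, possibly $\gamma$-dependent rate, and uniformly in the random coefficients $\widehat\sigma_n,\widehat a_n$ — is the crux; the rest is bookkeeping with the order-statistics and von Mises asymptotics already underlying Theorems~\ref{T1}–\ref{T3}.
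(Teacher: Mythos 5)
Your plan correctly isolates the difficulty (a data-driven location shift interacting with a $C$-condition that is only scale-invariant), but it stops exactly there: the step you yourself call ``the crux'' --- producing a margin $\rho_n$ with $\sqrt{k}\,\rho_n\to\infty$ uniformly over the random calibration --- is asserted, not proved, and the route you sketch for it is not the one that works. The paper does \emph{not} pass through a deterministic quantile approximation $X_{(i)}=u_1(n/(n-i))(1+o_P(1))$ and an integral over the bulk indices; it repeats the scheme of Theorem \ref{T2}: condition on $(X_{(n-k)},X_{(n-2k)})$, replace the top order statistics by i.i.d.\ exceedances, and show that each summand $Y_1$ satisfies $P(Y_1\le x)\le 1-e^{-(1-\varepsilon)x}+O(1/\sqrt{k})$ \emph{uniformly in} $x$, hence stochastically dominates an i.i.d.\ variable with mean $(1-\varepsilon_1)^{-1}>1$; the CLT for the dominating variables then gives divergence at rate $\sqrt{k}$. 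The only new ingredient relative to Theorem \ref{T2} is the replacement of \eqref{ineq} by
\[\frac{\overline{F}_1(u_1(t) + x(u_1(t) - u_1(t/2)))}{\overline{F}_1(u_1(t))} \ \ge\ \frac{\big(\overline{F}_0(u_0(t) + x(u_0(t) - u_0(t/2)))\big)^{1-\varepsilon}}{(\overline{F}_0(u_0(t)))^{1-\varepsilon}},\qquad x>0,\]
obtained in two lines: write $u_1(t)+x(u_1(t)-u_1(t/2))=u_1(t)\big(1+x-x\,u_1(t/2)/u_1(t)\big)$, compare $u_1(t/2)/u_1(t)$ with $u_0(t/2)/u_0(t)$ via the monotonicity of $u_0/u_1$, and then apply Proposition 1 (i.e.\ \eqref{B'}) with the multiplicative factor $c=1+x-x\,u_0(t/2)/u_0(t)$; the $O(1/\sqrt{k})$ error is controlled by the bivariate Lemma \ref{L2} in place of Lemma \ref{L1}. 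This mechanism --- converting $C_\delta$ into a uniform-in-$x$ tail-ratio bound that survives the affine recalibration --- is exactly what your proposal is missing.

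Its absence also leads you to a wrong quantitative expectation. You anticipate that for $\gamma=0$ the margin ``is itself vanishing and must be extracted from the $\Pi$-variation of $u_0$,'' which would put $\sqrt{k}\,\rho_n\to\infty$ in doubt for slowly growing $k$. In fact the margin is a constant independent of $\gamma$ and of $n$: Proposition 1 turns $C_\delta(F_0,F_1)$, $\delta>0$, into the tail bound with fixed exponent $1-\varepsilon$, $\varepsilon=\delta/(1+\delta)>0$, so the dominating variables have mean exceeding $1$ by the fixed amount $\varepsilon_1/(1-\varepsilon_1)$ and no case split on $\gamma$ is needed. Separately, your per-index reduction is itself fragile: a multiplicative $(1+o_P(1))$ perturbation of the argument of $-\ln\overline{F}_0$ changes that term by roughly $\big(xp_0(x)/\overline{F}_0(x)\big)\cdot o_P(1)$, which is unbounded when $\gamma=0$, so passing from the sum to an integral would require a uniformity you have not established. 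The stochastic-domination argument sidesteps all of this.
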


\begin{corollary}\label{C2}
Assume the conditions of Theorem \ref{T2}. Let $X_1^0,\ldots, X_n^0$ be i.i.d. random variables with the cdf $F_0.$ Denote the values of statistic \eqref{rnk2} built by samples $(X_1^0,\ldots, X_n^0)$ and $(X_1,\ldots, X_n)$ by $\widehat{R}_{k,n}^0$ and $\widehat{R}_{k,n}^1,$ respectively. If $C_0(F_0, F_1)$ holds then for all $x>0$ eventually
\[P(\widehat{R}_{k,n}^0 >x) \le P(\widehat{R}_{k,n}^1 >x),
\]
and if $C_0(F_1, F_0)$ holds then for all $x>0$ eventually
\[P(\widehat{R}_{k,n}^0 >x) \ge P(\widehat{R}_{k,n}^1 >x).
\]
\end{corollary}

Denote for convenience
\[\sigma^2(\gamma) = 1 + \frac{\gamma^2}{2(\gamma+1)^2(2^\gamma-1)^2}, \quad \gamma>0,\] and set $\sigma^2(0) = 1 + 1/(2(\ln 2)^2)$ as its limit as $\gamma\to 0.$ As in the previous section, consider two classes $A_0$ and $A_1$ of distribution tails such that tails in $A_0$ are lighter than those in $A_1,$ and assume that these classes are $C(F_0)$-separable from the right, where $F_0 \in \mathcal{D}(EV_\gamma).$ Let us propose the following procedure to distinguish  $A_0$ from $A_1:$
\begin{equation}\text{select} \begin{cases} A_0, & \text{ if } \widehat{R}_{k,n} \le 1+ \frac{\sigma(\gamma) u_{1-\alpha}}{\sqrt{k}}, \\
A_1, & \text{ otherwise.}\end{cases} \label{test-location}\end{equation}
This procedure is clearly location- and scale-free by the definition of $\widehat{R}_{k,n}.$ Moreover, Theorem \ref{T3}, Theorem \ref{T4}, and Corollary \ref{C2} imply that the upper bound over the type I error probabilities of procedure \eqref{test-location} is at most $\alpha$ asymptotically, and its power tends to $1$ as $n\to\infty$. The procedure for distinguishing $A_0$ from $A_1$ in case of their $C(F_0)$-separability from the left can be proposed similarly to the previous section.

\begin{remark} \label{R3} Assume that $A_0$ and $A_1$ are $C(F_0)$-separable from the right. Formally speaking, the procedures \eqref{test} and \eqref{test-location} distinguish not the classes $A_0$ and $A_1,$ but the wider classes $\hat A_0$ and $\hat A_1,$ where $\hat A_0$ consists of all (continuous) cdfs $G$ satisfying condition $C_0(G, F_0)$, and $\hat A_1$ consists of all (continuous) cdfs $H$ satisfying $C(F_0, H).$
\end{remark}

\begin{remark} Distributions belonging to $A_0$ or $A_1$ (apart from the cdf $F_0$ in case $F_0\in A_0$) do not have to satisfy the assumptions of the extreme value theorem \eqref{extremevaluetheorem}.
\end{remark}

\subsection{The choice of $k$} \label{kselection}

An important problem for practitioners is the optimal choice of $k.$ It should not be too small, since the more observations we use, the more accurate our conclusions are, but it should not be too large because, roughly speaking, the tail may ``end'' earlier and our conclusions about tail behavior in this case may turn out to be incorrect.

There are many papers devoted to the optimal selection of $k$ in more general context. For instance, we refer to \cite{Dani} for a review of such methods for tail index estimation, see also Section 5.4 in \cite{Gomes2}, and to \cite{Marrod} for those for extremal index estimation. These methods are most often based on either empirical principles, like choosing the optimal value of estimator from some interval of stability of its values, or on optimization of some metrics (e.g., MSE), often using the bootstrap.

However, the methods of choosing $k$ proposed in the framework of statistics of extremes are unsuitable when distinguishing between distribution tail models. Indeed, statistics of such procedures and tests quite often have a steady increasing or decreasing trend as $k$ increases, so the stability interval method cannot be used, and almost never tends to a certain finite value, implying that the method of optimizing some metric in the form that used for parameter estimation cannot be applied either. To the best of our knowledge, so far in papers devoted to testing hypotheses about distribution
tails this question has been omitted. Moreover, the most comprehensive review to date of methods for testing hypotheses about distribution tails \cite{hueslerpeng} suggests that the optimal selection of $k$ is an open problem.

Nevertheless, judging by our observations, some regularities can be distinguished. As a rule, a test for a hypothesis about the distribution tail (or a model selection procedure in our case) can be represented as
\[ \mbox{ if } S(k) > u, \mbox{ then reject } H_0 \text{ (select } A_1),\] where the statistic $S(k)$ is built from the $k$ largest order statistics of a sample, and $u$ is a constant independent of $k.$ For instance, all the procedures considered in this article can be represented in such a way. Moreover, it is often possible to ensure that the limit distribution of $S(k)$ does not depend on $k$ under the null hypothesis (or for some ``boundary'' distributions, like in this article, see Theorems \ref{T1} and \ref{T3}), where $k$ is an intermediate sequence, i.e., it satisfies \eqref{kn}. Let us consider such a case. Then three types of $S(k)$ behavior are most common as $k$ increases, and the type of this behavior does not change on different samples from the same distribution. Below we provide recommendations for testing the null hypothesis (or selecting the distribution tail model) for these three types of behavior:
\begin{enumerate} \item $S(k)$ has a well expressed increasing trend, and its values exceed $u$ almost immediately. In this case, $H_0$ should be rejected (the class $A_1$ should be selected);
\item $S(k)$ has a well expressed decreasing trend, and its values become smaller than $u$ almost immediately. In this case, $H_0$ should not be rejected (the class $A_0$ should be selected);
\item the values of $S(k)$ oscillate around $u$ for some interval of small values of $k.$ As a rule, there is no stability of the $S(k)$ values in this case. If there is no stable interval of these values below or above level $u,$ then no decision is possible. 
\end{enumerate}

Of course, these recommendations are empirical, and the problem of selection of $k$ while testing hypotheses about distribution tails deserves a separate study. Possible solutions to this problem may be the use of multiple hypothesis testing or sequential analysis.

\section{Simulation study} \label{simulationstudy}

In this section, we examine the efficiency of the procedures introduced by the article in comparison with some tests proposed in the literature. Before this, we provide some practical recommendations on selecting the parameters of $F_0$ for better performance of the procedures. We also apply our methods to data from the Daily
Global Historical Climatology Network (GHCNd) \cite{menne}. 


The role of the parameter $\alpha$ below is the following: it denotes the significance level for all tests considered in this section and corresponds to the asymptotic upper
bound over the empirical type I error probabilities of the model selection procedures \eqref{test} and \eqref{test-location}. We set its value equal to $0.05.$ The number of replicates used to calculate empirical characteristics below is always equal to $2000.$

\subsection{Practical recommendations on selecting $F_0$}\label{pracrec}

When discussing the selection of $F_0$ in Remarks \ref{R1} and \ref{R2}, we noted that we do not provide a procedure for the optimal choice of $F_0$ (and, consequently, its parameters). Nevertheless, some practical recommendations can be given.

First note that the statistic used in the location- and scale-free procedure does not depend on the scale and location parameters of $F_0,$ and the statistic used in the scale-free procedure does not depend on the scale parameter of $F_0,$ so these parameters need not be chosen.

Next, assume that classes $A_0$ and $A_1$ are $C(F_0)$-separable from the right. If $A_0$ has a natural ``left bound'' (see Remark \ref{R2} for definition and, e.g., Examples \ref{E3}, \ref{E4}), then $F_0$ should be chosen equal to this bound. In this case the scale and location parameters need not be chosen as well. 

If $A_0$ has no natural ``left bound'', like in Examples \ref{E1} and \ref{E2}, it is enough to vary only one parameter to derive a good quality procedure. We suggest choosing some ``basic'' distribution in $A_0$ quite close to $A_1$ in terms of tail heaviness (for example, in Section \ref{sim1} we choose the Weibull distribution with cdf $F(x) = 1 - \exp(-x^{a}), x>0,$ and $a=0.5$) and selecting the shape parameter of $F_0$ in such a way that the average empirical type I error probability would be close to $\alpha$. For this, the preliminary simulation study should be carried out, see Section \ref{f0selection}.


Finally, we note that using the location- and scale-free procedure is preferable when distinguishing between distributions from the Gumbel MDA, in which the location parameter can strongly affect the tail behavior for small samples (see Fig. \ref{fig:weibull2}).
Moreover, given fixed classes $A_0$ and $A_1$, for different sample sizes the same separating cdf $F_0$ can be used.

\subsection{Impact of $F_0$ choice on procedure performance}\label{f0selection} In this section, we discuss the impact of the choice of separating cdf $F_0$ on the performance of the procedures proposed by this article. 

First, let us give the details of the preliminary simulation procedure mentioned in Section \ref{pracrec} whose purpose is to select the optimal value of the shape parameter given a fixed choice of $F_0$ family. We focus on the performance of the procedure \eqref{test-location} for distinguishing Weibull-type from log-Weibull-type tails, which is discussed in detail in Section \ref{sim1}. We select $F_0$ given by \eqref{w-lw} and show how the analogs of the type I error probabilities and the power of this procedure change depending on the choice of the parameter $b.$ Two distributions are selected:
\begin{itemize}
    \item the Weibull distribution $\mathcal{W}(0.5,1)$ with cdf
    \begin{equation}F(x) = 1 - \exp(-x^{0.5}), x>0,\label{w051}\end{equation}
    whose tail is of Weibull type;
    \item the standard lognormal distribution $\mathcal{LN}(0,1)$ with density
    \[p(x) = \frac{1}{\sqrt{2\pi} x} \exp\big(-(\ln x)^2/2\big),\]
    whose tail is of log-Weibull type.
\end{itemize} 
Recall that it is the distribution \eqref{w051} that we propose as "basic" for distinguishing between Weibull-type and log-Weibull-type tails according to the recommendations given in Section \ref{pracrec}.

For each distribution, we generated $5000$ samples of size $n = 2500$ and
computed the rejection rates of the procedure \eqref{test-location} for $k$ from $5$ up to $500$ in steps of $5.$ In Figure \ref{f01} we plot the empirical I type error probabilities (left panel) and empirical power (right panel) of this procedure for $b = 2.5, 3, 3.5, 4,$ and $4.5,$ the shape parameter of $F_0$ given by \eqref{w-lw}.

\begin{figure}[ht]
\begin{minipage}[h]{0.48\linewidth}
\center{\includegraphics[width=1\linewidth]{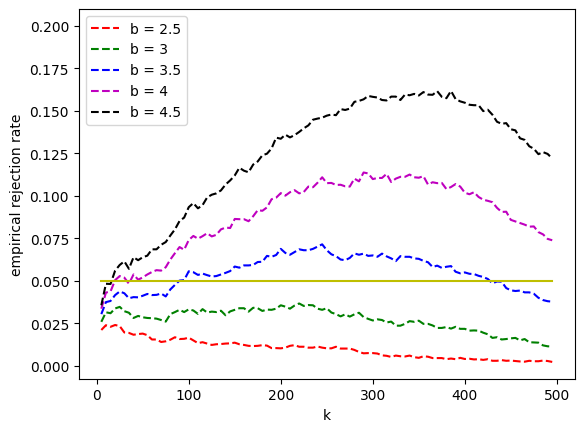} 
\\ {\small $\mathcal{W}(0.5,1)$}}
\end{minipage}
\hfill
\begin{minipage}[h]{0.48\linewidth}
\center{\includegraphics[width=1\linewidth]{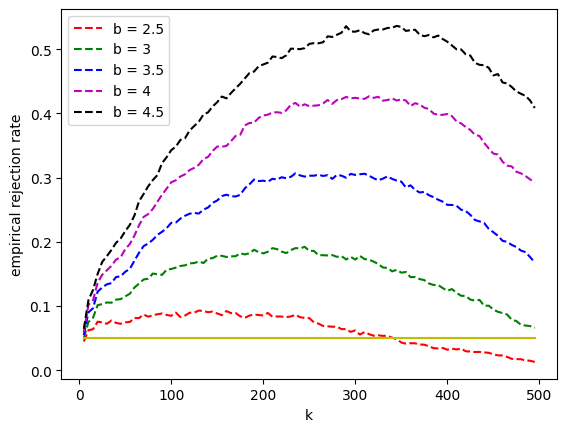} 
\\ {\small $\mathcal{LN}(0,1)$}}
\end{minipage}
\caption{{\footnotesize Empirical type I error probabilities (left) and empirical power (right) of the procedure \eqref{test-location} for distinguishing Weibull-type from log-Weibull type tails with $F_0$ given by \eqref{w-lw} and 5 different values of its shape parameter $b,$ the level $\alpha = 0.05$ is indicated by a yellow solid line, $n=2500.$}}
\label{f01}
\end{figure}
According to Figure \ref{f01} and the recommendations given in Section \ref{pracrec}, the best choice for the parameter $b$ would be $3.5$ because the average empirical type I error probability of the procedure \eqref{test-location} on the "basic" distribution $\mathcal{W}(0.5,1)$ is closer to $0.05$ than those for other values of $b.$ Of course, the choice of $b$ strongly affects the power of the procedure, see the right panel of Figure \ref{f01}: the smaller $b$ the smaller the empirical power.

It is also worth to analyse how much the performance of the procedure changes if we vary the form of the separating cdf $F_0$. Let us consider distinguishing regularly-varying from Weibull-type distribution tails, in this case it is reasonable to select $F_0$ of log-Weibull type. Indeed, every log-Weibull-type distribution is separating for these two classes. So, we select four log-Weibull-type distributions as candidates for $F_0:$
\begin{itemize}
\item the log-Weibull distribution $\mathcal{LW}(\lambda, 1)$ with cdf \[F(x) = 1 - \exp( - (\ln x)^\lambda), \quad x>1,\; \lambda>0,\] and $\lambda = 1.5, 2$ and $3;$
\item the standard lognormal distribution $\mathcal{LN}(0,1);$ 
\end{itemize}
and analyse the performance of the procedure \eqref{test-location} on two distributions, where the first is regularly-varying and the second is of Weibull type:
\begin{itemize}
\item the generalized Pareto distribution $\mathcal{GP}(0.25, 1)$ with cdf \[F(x) = 1 - (1 + 0.25 x)^{-4}, \quad x>0;\] 
\item the standard exponential distribution $Exp(1).$
\end{itemize}
The $\mathcal{GP}(0.25, 1)$ distribution can be considered as "basic" in this example.
For each distribution, we generate $5000$ samples of size $n = 2500,$ compute the rejection rates of the procedure \eqref{test-location} with four candidate $F_0$ listed above, and plot these rates in Figure \ref{f02}.

\begin{figure}[ht]
\begin{minipage}[h]{0.48\linewidth}
\center{\includegraphics[width=1\linewidth]{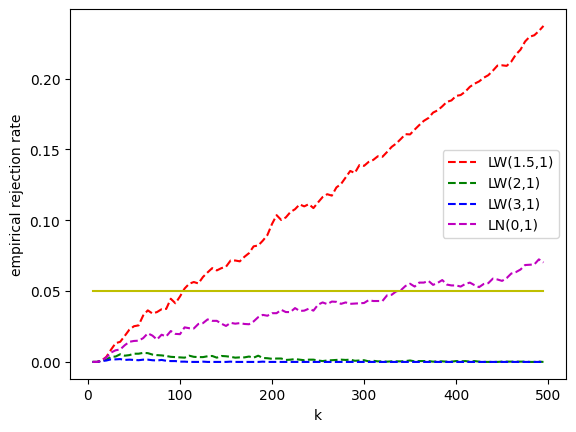} 
\\ {\small $\mathcal{GP}(0.25, 1)$}}
\end{minipage}
\hfill
\begin{minipage}[h]{0.48\linewidth}
\center{\includegraphics[width=1\linewidth]{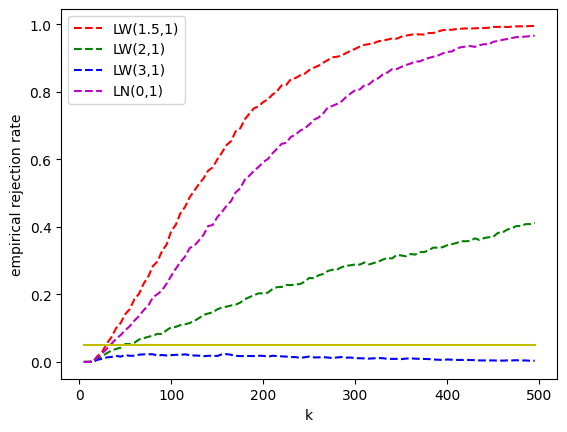} 
\\ {\small $Exp(1)$}}
\end{minipage}
\caption{{\footnotesize Empirical type I error probabilities (left) and empirical power (right) of the procedure \eqref{test-location} for distinguishing regularly-varying from Weibull-type tails with four $F_0$ listed above, the level $\alpha = 0.05$ is indicated by a yellow solid line, $n=2500.$}}
\label{f02}
\end{figure}

Despite $\mathcal{LW}(1.5,1)$ is the closest among considered distributions to the class of distributions with regularly-varying tails in terms of tail heaviness, it is not suitable to be a separating cdf for distinguishing between regularly-varying and Weibull-type tails because of high type I error probability. On the other hand, selecting $\mathcal{LW}(3,1)$ as $F_0$ for this problem has no sense due to poor performance of the procedure on the $Exp(1)$ distribution. Due to the recommendations given in Section \ref{pracrec}, it is reasonable to select the $\mathcal{LN}(0,1)$ distribution as $F_0$ for this problem (or consider for this purpose the $\mathcal{LN}(0, b)$ distribution with the shape parameter $b$ selected by the procedure described above in this section). It is worth noting the rather different performance of the procedures with $F_0 = \mathcal{LN}(0,1)$ and $F_0 = \mathcal{LW}(2,1)$ despite the similar behavior of their tails. This effect can be explained by the different shapes of these distributions which influences the different finite sample behavior of the procedure. 

\subsection{Distinguishing Weibull-type from log-Weibull-type di\-stri\-bu\-tion tails} \label{sim1}
 As mentioned in Section \ref{recent}, to the best of our knowledge, only one existing publication concerns distinguishing Weibull-type from log-Weibull-type tails, namely \cite{Goegebeur}. They introduced the Jackson- and Lewis-type goodness-of-fit tests for Weibull-type tail behavior.

We compare the procedures \eqref{test} and \eqref{test-location} adapted for distinguishing between the classes $\mathcal{W}$ and $\mathcal{LW},$ i.e., the classes of Weibull-type and log-Weibull-type distribution tails, respectively, with the Jackson- and Lewis-type tests from \cite{Goegebeur}. According to Remark \ref{R3} we can consider the distributions with tails heavier than those of $\mathcal{LW}$, in particular, regularly-varying distributions, to be included in $\mathcal{LW}$. As a separating cdf for procedures \eqref{test} and \eqref{test-location}, we select $F_0$ given by \eqref{w-lw} with $b$ equal to $1.8$ and $3.5,$ respectively, according to the procedure described in Section \ref{f0selection}. We examine the performance of the procedures and tests listed above on the following set of distributions, which mostly coincides with those used by \cite{Goegebeur}. First, we list Weibull-type distributions:

 \begin{itemize}
 \item The Weibull distribution $\mathcal{W}(a, \lambda)$
 \[F(x) = 1 - \exp(-\lambda x^a), \quad x>0,\;  a, \lambda>0,\]
 with $(a, \lambda) = (0.5, 1).$
 \item The standard normal distribution $\mathcal{N}(0,1).$
 \item The gamma distribution $\Gamma(a, \lambda)$ with density
 \[p(x) = \frac{\lambda^a x^{a-1}}{\Gamma(a)}e^{-\lambda x}, \quad x>0,\; \alpha, \lambda>0,\] with $(a, \lambda) = (0.25,1),\ (4,1).$
 \item The modified standard exponential distribution $\mathcal{ME}(1),$ defined as the distribution of the random variable $Y = X \ln X,$ where $X\sim Exp(1).$
 \item The extended Weibull model $\mathcal{EW}(a, \beta)$
  \[F(x) = 1 - r(x)e^{-x^a},\quad  x>0,\]
  where $a>0$ and $r(x)$ is regularly varying at infinity with index $\beta \in \mathbb{R}.$ We set $r(x) = (x+1)^{-1}$ and $a = 2.$
 \end{itemize}
 Now we list the log-Weibull-type distributions and distributions with heavier tails which we use in this section:
\begin{itemize}
 \item The lognormal distribution $\mathcal{LN}(\mu, \sigma)$ with density \[p(x) = \frac{1}{\sqrt{2\pi\sigma^2}x} \exp\left(-\frac{(\ln x - \mu)^2}{2\sigma^2}\right),\quad x>0,\, \mu\in \mathbb{R},\, \sigma^2>0,\] with $\mu=0$ and $\sigma^2=1.$
\item The log-Weibull distribution $\mathcal{LW}(\lambda, c)$ with cdf \[F(x) = 1 - \exp( - (\ln (x/c))^\lambda), \quad x>c,\; \lambda, c>0,\] and $(\lambda,c) = (1.5,1).$
\item The generalized Pareto distribution $\mathcal{GP}(\gamma, \sigma)$ with cdf \[F(x) = 1 - (1 + \gamma x/\sigma)^{-1/\gamma}, \quad x>0,\, \gamma, \sigma>0,\] with the cases $(\gamma, \sigma) = (0.25, 1), (0.5, 1), (1, 1),$ denoted by $P_1,$ $P_2$ and $P_3,$ respectively.
\item The t-distribution $\mathcal{T}(n)$ with $n=3$ degrees of freedom.
 \end{itemize}
For each distribution, we generated samples of size $n = 2500$ and
computed the rejection rates of all procedures and tests for $k$ from $5$ up to $500$ in steps of $5.$

\begin{figure}[ht]
\begin{minipage}[h]{0.32\linewidth}
\center{\includegraphics[width=1\linewidth]{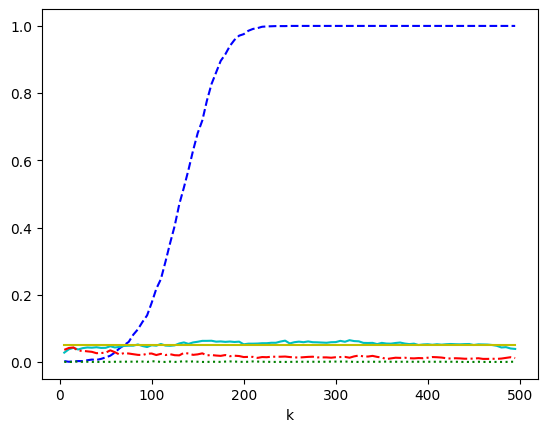}\\  
{\small $\mathcal{W}(0.5,1)$}}
\end{minipage}
\hfill
\begin{minipage}[h]{0.32\linewidth}
\center{\includegraphics[width=1\linewidth]{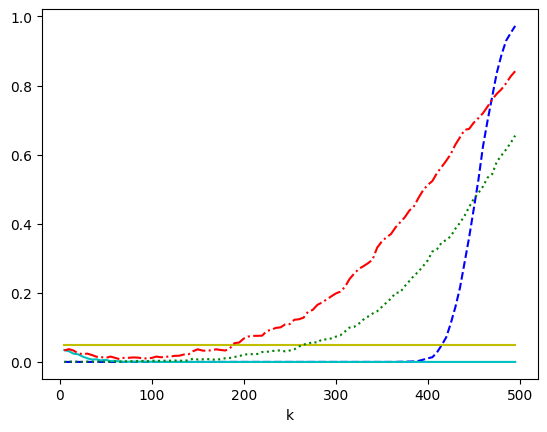}\\ 
{\small $\mathcal{N}(0,1)$}}
\end{minipage}
\hfill
\begin{minipage}[h]{0.32\linewidth}
\center{\includegraphics[width=1\linewidth]{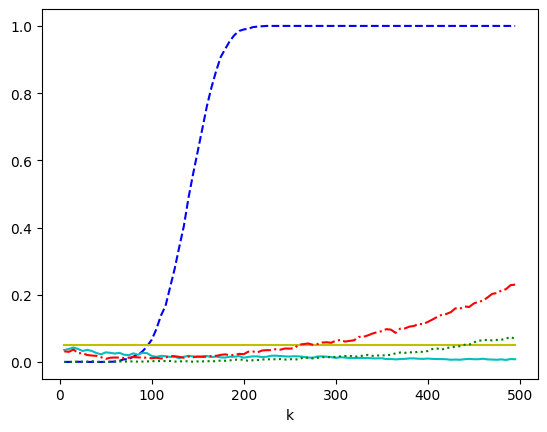}\\ 
 {\small $\Gamma(0.25, 1)$}}
\end{minipage}

\bigskip

\begin{minipage}[h]{0.32\linewidth}
\center{\includegraphics[width=1\linewidth]{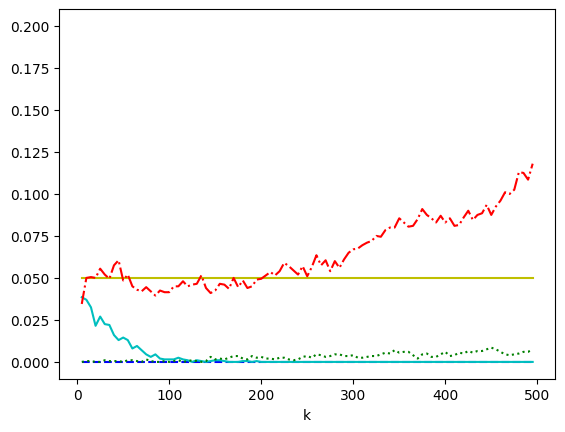} 
\\
{\small $\Gamma(4,1)$}}
\end{minipage}
\hfill
\begin{minipage}[h]{0.32\linewidth}
\center{\includegraphics[width=1\linewidth]{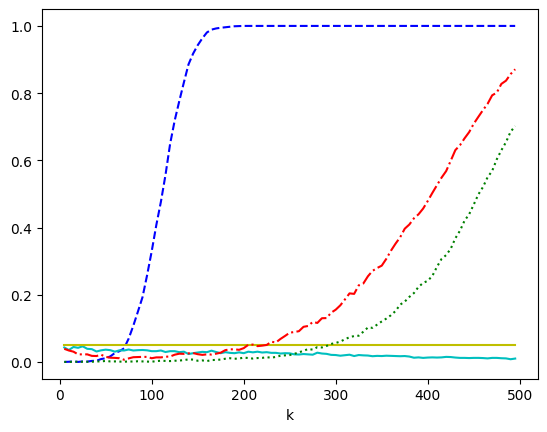} 
\\ {\small $\mathcal{ME}(1)$}}
\end{minipage}
\hfill
\begin{minipage}[h]{0.32\linewidth}
\center{\includegraphics[width=1\linewidth]{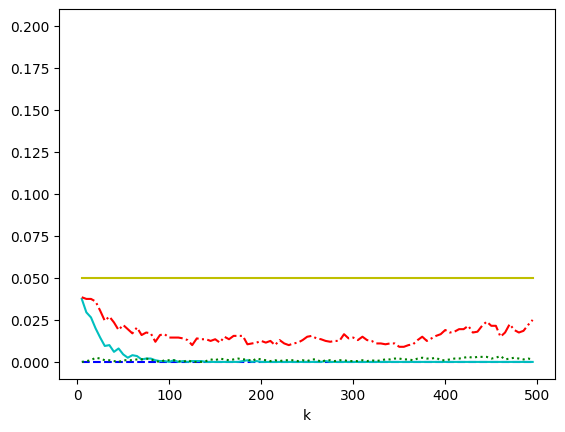} 
\\
{\small $\mathcal{EW}(2, -1)$}}
\end{minipage}
\caption{{\footnotesize Empirical type I error probabilities of the procedures \eqref{test} (blue dashed line), \eqref{test-location} (cyan solid line), Jackson-type (green dotted line) and Lewis-type (red dash-dotted line)  tests for various distributions, the level $\alpha = 0.05$ is indicated by a yellow solid line, $n=2500.$}}
\label{fig:weibull}
\end{figure}

\begin{figure}[ht]
\begin{minipage}[h]{0.32\linewidth}
\center{\includegraphics[width=1\linewidth]{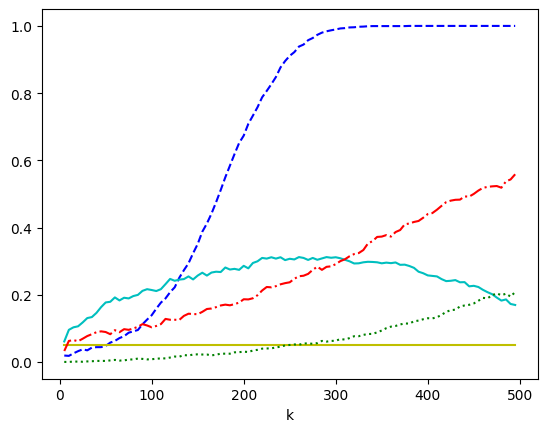} 
\\ {\small $\mathcal{LN}(0,1)$}}
\end{minipage}
\hfill
\begin{minipage}[h]{0.32\linewidth}
\center{\includegraphics[width=1\linewidth]{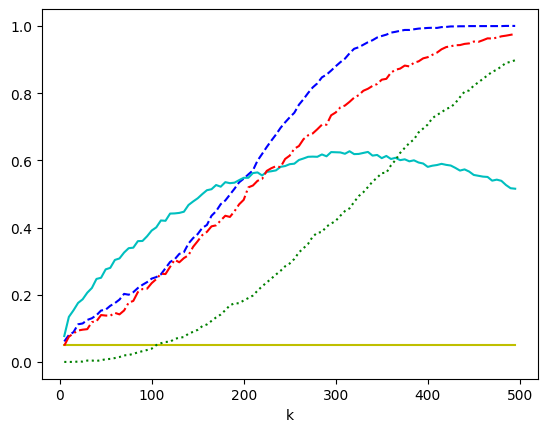} 
\\ {\small $\mathcal{LW}(1.5,1)$}}
\end{minipage}
\hfill
\begin{minipage}[h]{0.32\linewidth}
\center{\includegraphics[width=1\linewidth]{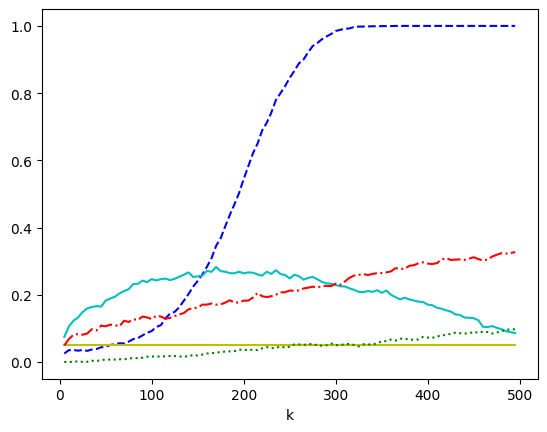} 
\\ {\small $\mathcal{GP}(0.25,1)$}}
\end{minipage}

\bigskip

\begin{minipage}[h]{0.32\linewidth}
\center{\includegraphics[width=1\linewidth]{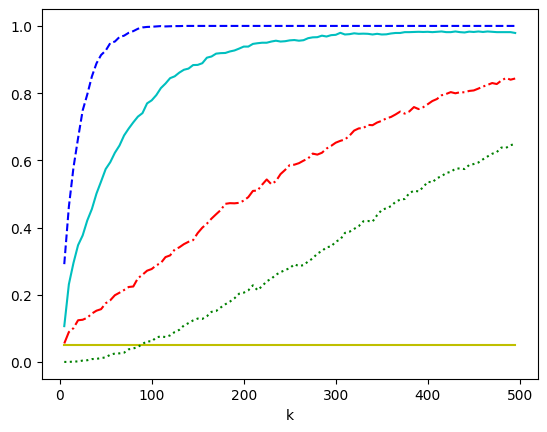} 
\\
{\small $\mathcal{GP}(0.5,1)$}}
\end{minipage}
\hfill
\begin{minipage}[h]{0.32\linewidth}
\center{\includegraphics[width=1\linewidth]{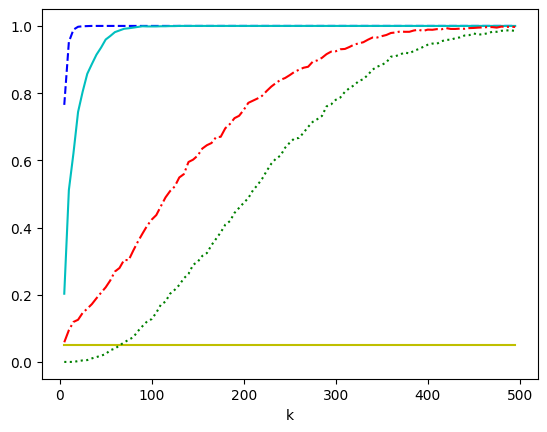} 
\\ {\small $\mathcal{GP}(1,1)$}}
\end{minipage}
\hfill
\begin{minipage}[h]{0.32\linewidth}
\center{\includegraphics[width=1\linewidth]{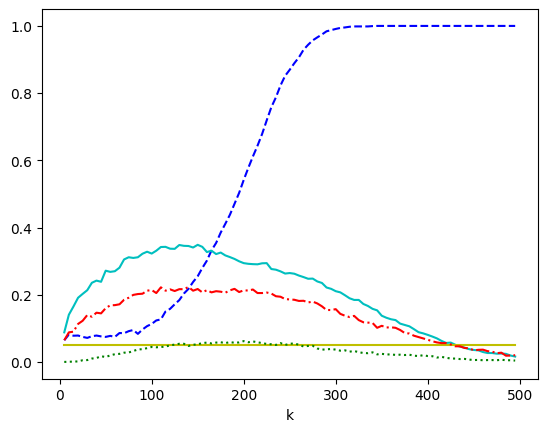} 
\\
{\small $\mathcal{T}(3)$}}
\end{minipage}
\caption{{\footnotesize Empirical power of the procedures \eqref{test} (blue dashed line), \eqref{test-location} (cyan solid line), Jackson-type (green dotted line) and Lewis-type (red dash-dotted line) tests for distributions from $\mathcal{W},$ the level $\alpha = 0.05$ is indicated by a yellow solid line, $n=2500$.}}
\label{fig:logweibull}
\end{figure}

In Figure \ref{fig:weibull} for the six distributions from $\mathcal{W}$ we plot the empirical type I error probabilities of the scale-free procedure \eqref{test}, location- and scale-free procedure \eqref{test-location}, Jackson- and Lewis-type tests, proposed by \cite{Goegebeur}. 
Since $\mathcal{W}$ is the null class, we expect to see empirical type I error probabilities below level $\alpha$ for distributions from $\mathcal{W};$ but only procedure \eqref{test-location} has these probabilities below $0.05$ (for $\mathcal{W}(0.5,1),$ close to $0.05$) for all $k\le 500$ and all distributions from $\mathcal{W}.$ However, the Jackson- and Lewis-type tests provide reasonable empirical type I error probabilities as long as $k$ is not too high, for instance, if $k\le 200.$ 
We focus on this interval of $k$ values. 

In Figure \ref{fig:logweibull} for six distributions from $\mathcal{LW}$ we plot empirical power of the procedures and tests mentioned above. Procedure \eqref{test-location} is more powerful than the Jackson- and Lewis-type tests and procedure \eqref{test} is more powerful than the Jackson-type test on distributions from the class $\mathcal{LW}$ when $k\le 200.$

Analyzing Fig. \ref{fig:weibull}-\ref{fig:logweibull},
 one may conclude that procedure \eqref{test-location}
  is always preferable to procedure \eqref{test}.
  The reason to use \eqref{test}
   instead of \eqref{test-location},
   in particular, can be a small sample; see Fig. \ref{fig:logweibull2}. It can also be reasonable to use \eqref{test}
   for distinguishing between quite heavy (in particular, regularly-varying) distribution tails. Fig. \ref{fig:logweibull2} shows empirical power of all tests and procedures considered in this section for distributions $\mathcal{LN}(0,1),$ $\mathcal{GP}(0.25, 1)$ and $\mathcal{T}(3)$ and $n=1000.$  The empirical power of \eqref{test}
   is substantially larger than the empirical power of other tests/procedures for moderate $k$.

\begin{figure}[ht]
\begin{minipage}[h]{0.32\linewidth}
\center{\includegraphics[width=1\linewidth]{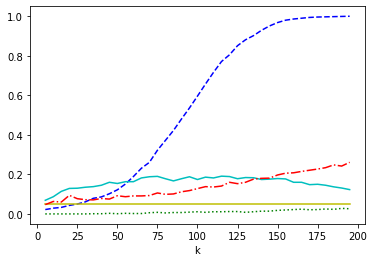} \\ $\mathcal{LN}(0,1)$}
\end{minipage}
\hfill
\begin{minipage}[h]{0.32\linewidth}
\center{\includegraphics[width=1\linewidth]{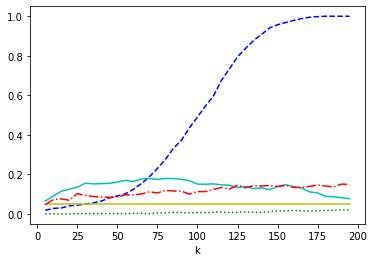} \\ $\mathcal{GP}(0.25,1)$}
\end{minipage}
\hfill
\begin{minipage}[h]{0.32\linewidth}
\center{\includegraphics[width=1\linewidth]{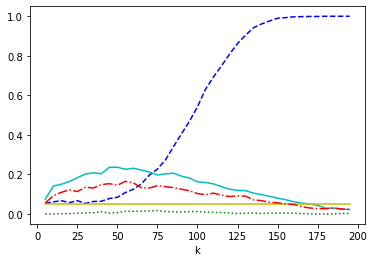} \\ $\mathcal{T}(3)$}
\end{minipage}
\caption{{\footnotesize Empirical power of the procedures \eqref{test}
(blue dashed line), \eqref{test-location}
(cyan solid line), the Jackson-type (green dotted line) and Lewis-type (red dash-dotted line) tests for distributions from $\mathcal{LW}$, the level $\alpha = 0.05$ is indicated by a yellow solid line, $n=1000.$}}
\label{fig:logweibull2}
\end{figure}
However, the Jackson- and Lewis-type tests and the procedure \eqref{test}
 are quite sensitive to the location parameter. In Fig. \ref{fig:weibull2}, we provide the empirical type I error probabilities of all the tests and procedures considered in this section for the exponential density $p(x) = \exp(-(x-a)) I(x>a)$ and values $a = -1.5, 0, 1.5$ of the location parameter, with $n=2500.$ For $a=0,$ the empirical type I error probabilities of all the tests and procedures are below $\alpha$ for all $k\le 500$ ($k\le 400$ for the test \eqref{test})
  whereas for $a = -1.5$ the empirical type I error probabilities of all the tests/procedures except \eqref{test-location}
   increase substantially. Moreover, for $a=1.5$ these probabilities for Lewis-type test are higher $0.05.$ Since even small changes of the location parameter can cause large changes in the properties of the Jackson- and Lewis-type tests as well as in \eqref{test},
   they should be applied with great caution in practice when information about the location parameter is lacking. Moreover, these tests cannot be applied to samples containing only negative observations, an additional difficulty when using them. Summarizing the above, we can conclude that the use of the procedure \eqref{test-location} as compared to the other three procedures considered above is more beneficial for distinguishing between the classes $\mathcal{W}$ and $\mathcal{LW}.$
\begin{figure}[ht]
\begin{minipage}[h]{0.32\linewidth}
\center{\includegraphics[width=1\linewidth]{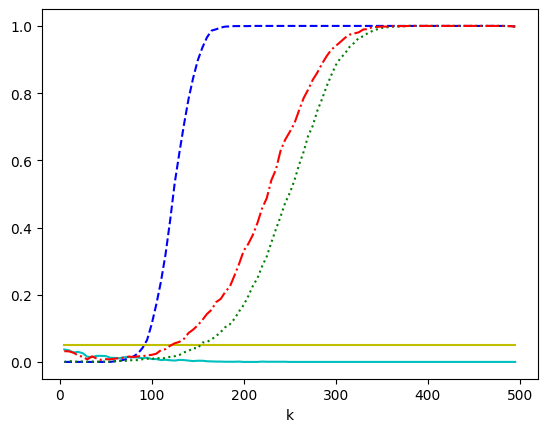} 
\\ $Exp(1,-1)$}
\end{minipage}
\hfill
\begin{minipage}[h]{0.32\linewidth}
\center{\includegraphics[width=1\linewidth]{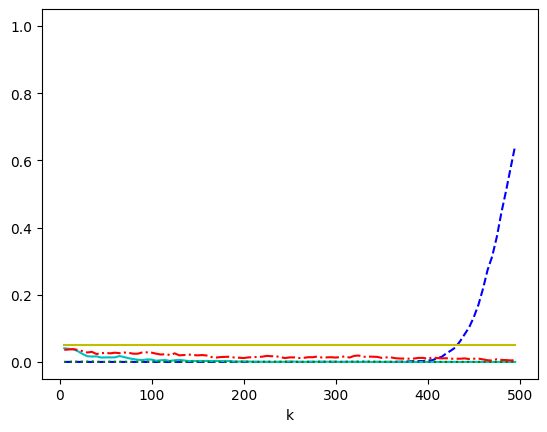} 
\\ $Exp(1,0)$}
\end{minipage}
\hfill
\begin{minipage}[h]{0.32\linewidth}
\center{\includegraphics[width=1\linewidth]{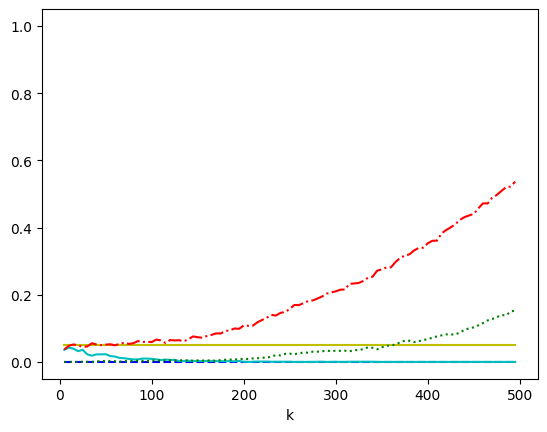} 
\\ $Exp(1,1)$}
\end{minipage}
\caption{{\footnotesize Empirical power of the procedures \eqref{test}
(blue dashed line), \eqref{test-location}
 (cyan solid line), the Jackson-type (green dotted line) and Lewis-type (red dash-dotted line) tests for exponential distributions with various values of the location parameter, the level $\alpha = 0.05$ is indicated by a yellow solid line, $n=2500.$}}
\label{fig:weibull2}
\end{figure}

\begin{remark}
The procedure \eqref{test-location} seems to be quite conservative in the
majority of situations (however, the same can be said about the Jackson-type test), the empirical type I error probabilities being very far from the
nominal level $0.05.$ This phenomenon can be explained by the analogy with the uniformly most powerful test for testing $H_0 :\theta \le \theta_0$ versus $H_1: \theta> \theta_0$ if a parametric family of distributions $\mathcal{P} = \{P_\theta, \theta\in \Theta \subset \mathbb{R}\}$ possesses the monotone likelihood ratio property. This test is in fact conservative for every distribution from the null hypothesis except $P_{\theta_0}$ which acts as a "boundary" for the null hypothesis, or as a "separating distribution" (compare with $F_0$ in our setting) for classes $\mathcal{P}_0 = \{P_\theta, \theta<\theta_0\}$ and $\mathcal{P}_1 = \{P_\theta, \theta>\theta_0\}.$ The $C_0$- 
 and $C$-conditions are somewhat analogs of the monotone likelihood ratio property for the procedures \eqref{test} and \eqref{test-location}, which are eventually conservative according to Corollary \ref{C1} and Corollary \ref{C2}, respectively.
\end{remark}


\subsection{Distinguishing log-Weibull-type from regularly-varying di\-stri\-bution tails} \label{sim2}

The range of tests that can be used for the problem of this section is quite wide. Here we consider the procedures \eqref{test}
 and \eqref{test-location}
 adapted for distinguishing between the classes $\mathcal{LW}$ and $\mathcal{RV},$ where $\mathcal{RV}$ is the class of distributions with regularly-varying tails, which coincides with the Fr\'echet MDA $\mathcal{D}(EV_\gamma)$ for $\gamma>0.$ We compare their efficiency with that of some methods for testing the hypothesis $\tilde H_0: F\in \mathcal{D}(EV_0),$ i.e., the hypothesis of belonging to the Gumbel MDA, against the alternative $H_1: F\in \mathcal{D}(EV_\gamma),$ $\gamma>0;$ see \cite{nevesfraga} for an overview of such methods. Namely, we use
\begin{enumerate}
\item the test of \cite{Hasofer-Wang}, see also \cite{Alves-Neves}, based on the Shapiro--Wilk type statistic
\[W_n(k) = \frac{k\big(\frac{1}{k}\sum_{i=0}^{k-1}X_{(n-i)} - X_{(n-k+1)}\big)^2}{(k-1)\sum_{j=0}^{k-1}\big(\frac{1}{k}\sum_{i=0}^{k-1}X_{(n-i)} - X_{(n-j)}\big)^2};\]
\item the test proposed in \cite{Alves-Neves} and based on the Greenwood-type statistic $G_n(k),$ a modification of the previous test;
\item the ratio test proposed in \cite{Picek} and based on the statistic
\[R_n(k) = \frac{X_{(n)} - X_{(n-k)}}{\frac{1}{k}\sum_{i=0}^{k-1}X_{(n-i)} - X_{(n-k)}};\]
\item and the likelihood ratio test, see, e.g., \cite{Picek}.
\end{enumerate}
Numerical comparisons of these tests can be found in \cite{Alves-Neves} and \cite{Picek}, see also \cite{nevesfraga}.

As a separating cdf for \eqref{test}
 and \eqref{test-location}
  we select $F_0$ given by \eqref{lw-rv}
   with $b$ equal to $0.6$ and $1.1,$ respectively. We compare the efficiency of the tests and procedures listed above on the following distributions (many of which were introduced in the previous section) belonging to the Gumbel MDA:

\begin{itemize}
\item the standard exponential distribution $Exp(1);$
\item the Weibull distribution $\mathcal{W}(0.5,1);$
\item the gamma distribution $\Gamma(0.25, 1);$
\item the modified standard exponential distribution $\mathcal{ME}(1);$
\item the standard lognormal distribution $\mathcal{LN}(0,1);$
\item the log-Weibull distribution $\mathcal{LW}(\lambda, c)$ with $(\lambda, c) = (1.5, 1), (3, 1);$
\end{itemize}
and on others belonging to the Fr\'echet MDA:
\begin{itemize}
\item the generalized Pareto distribution $\mathcal{GP}(\alpha, \sigma)$ with $(\gamma, \sigma) = (1/3, 1),$\\ $(0.5, 1), (1, 1);$ respectively.
\item the standard Cauchy distribution $\mathcal{C}(1);$
\item the t-distribution with 3 degrees of freedom $\mathcal{T}(3);$
\item the Burr distribution $Burr(c,d)$ with cdf
\[F(x) = 1 - \frac{1}{(1+ x^{-c})^d}, \quad x>0,\] with $(c,d) = (2, 2),\, (3,2).$
\end{itemize}

For each distribution we generated samples of size $n = 2500$ and
computed the empirical rejection rates of all tests and procedures for $k$ from $5$ up to $500.$
The empirical properties of the Hasofer--Wang, Greenwood-type and likelihood ratio tests are quite similar, so we do not show the results for the last two.

\begin{figure}[ht]
\begin{minipage}[h]{0.32\linewidth}
\center{\includegraphics[width=1\linewidth]{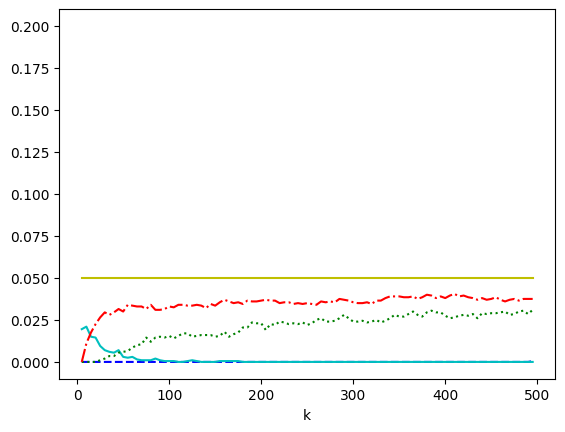} 
\\
{\small $Exp(1)$}}
\end{minipage}
\hfill
\begin{minipage}[h]{0.32\linewidth}
\center{\includegraphics[width=1\linewidth]{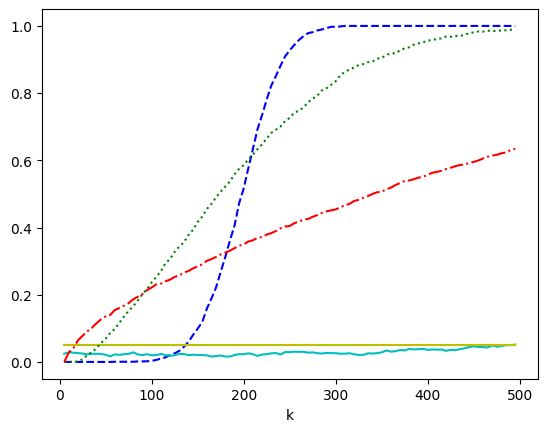} 
\\ {\small $\mathcal{W}(0.5,1)$}}
\end{minipage}
\hfill
\begin{minipage}[h]{0.32\linewidth}
\center{\includegraphics[width=1\linewidth]{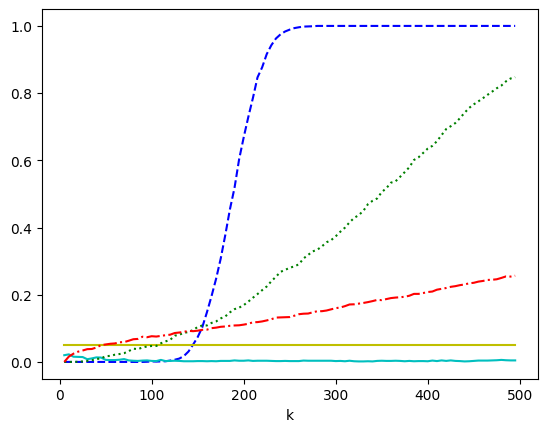} 
\\ {\small $\Gamma(0.25, 1)$}}
\end{minipage}

\bigskip

\begin{minipage}[h]{0.24\linewidth}
\center{\includegraphics[width=1\linewidth]{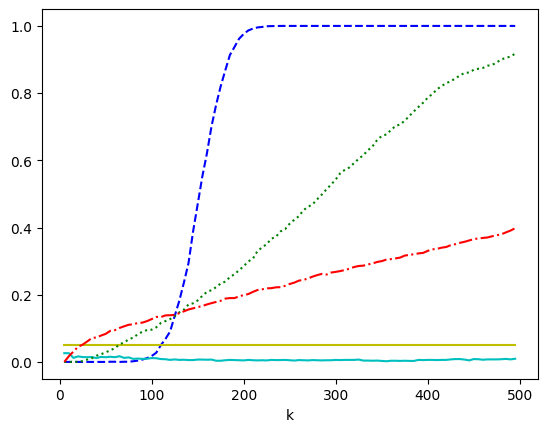} 
\\
{\small $\mathcal{ME}(1)$}}
\end{minipage}
\hfill
\begin{minipage}[h]{0.24\linewidth}
\center{\includegraphics[width=1\linewidth]{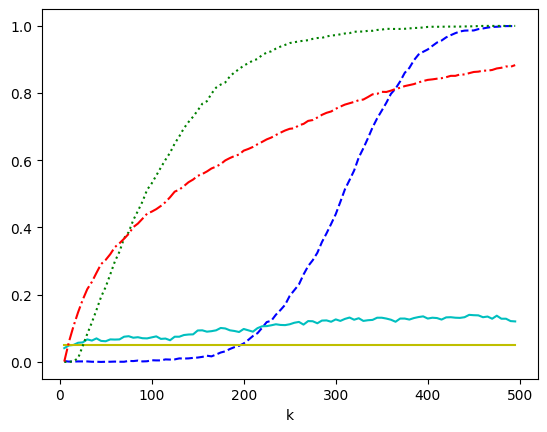} 
\\ {\small $\mathcal{LN}(0,1)$}}
\end{minipage}
\hfill
\begin{minipage}[h]{0.24\linewidth}
\center{\includegraphics[width=1\linewidth]{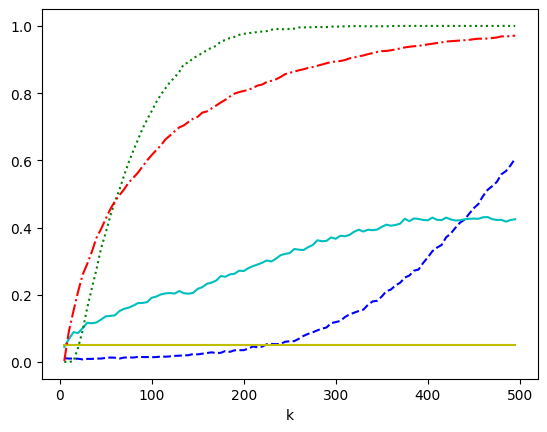} 
\\
{\small $\mathcal{LW}(1.5, 1)$}}
\end{minipage}
\hfill
\begin{minipage}[h]{0.24\linewidth}
\center{\includegraphics[width=1\linewidth]{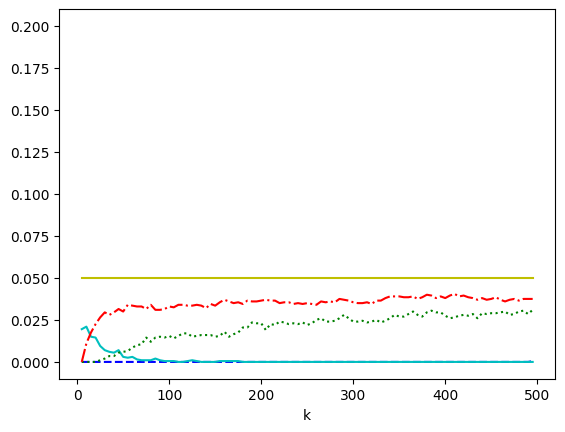} 
\\
{\small $\mathcal{LW}(3, 1)$}}
\end{minipage}
\caption{{\footnotesize Empirical type I error probabilities of the procedures \eqref{test}
 (blue dashed line), \eqref{test-location}
  (cyan solid line), the Hasofer--Wang (green dotted line) and ratio (red dash-dotted line) for various distributions, the level $\alpha = 0.05$ is indicated by a yellow solid line, $n=2500.$}}
\label{fig:lwrv1}
\end{figure}

\begin{figure}[ht]
\begin{minipage}[h]{0.32\linewidth}
\center{\includegraphics[width=1\linewidth]{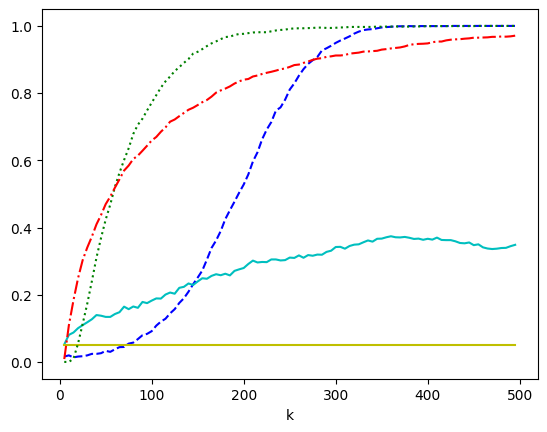} 
\\
{\small $\mathcal{GP}(1/3,1)$}}
\end{minipage}
\hfill
\begin{minipage}[h]{0.32\linewidth}
\center{\includegraphics[width=1\linewidth]{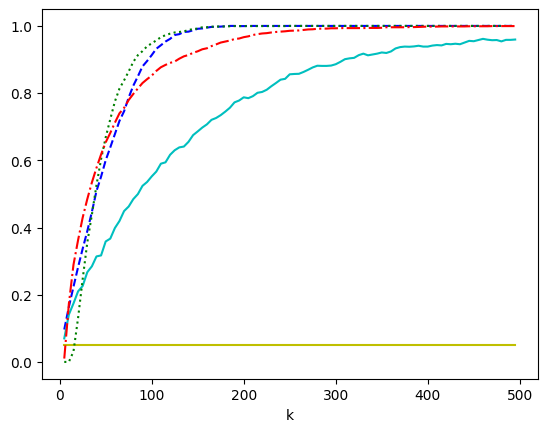} 
\\ {\small $\mathcal{GP}(0.5,1)$}}
\end{minipage}
\hfill
\begin{minipage}[h]{0.32\linewidth}
\center{\includegraphics[width=1\linewidth]{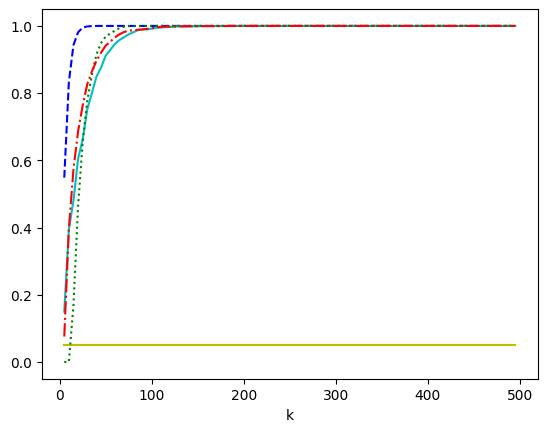} 
\\ {\small $\mathcal{GP}(1,1)$}}
\end{minipage}

\bigskip

\begin{minipage}[h]{0.24\linewidth}
\center{\includegraphics[width=1\linewidth]{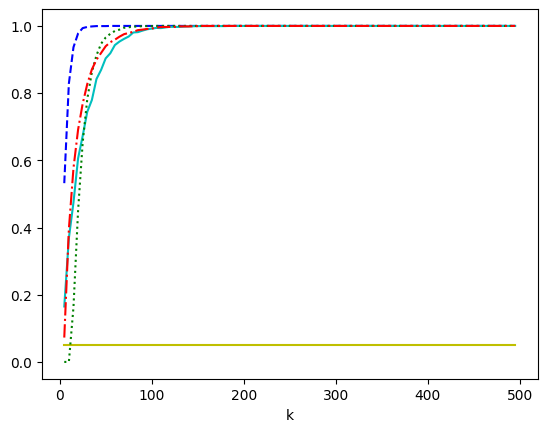} 
\\
{\small $\mathcal{C}(1)$}}
\end{minipage}
\hfill
\begin{minipage}[h]{0.24\linewidth}
\center{\includegraphics[width=1\linewidth]{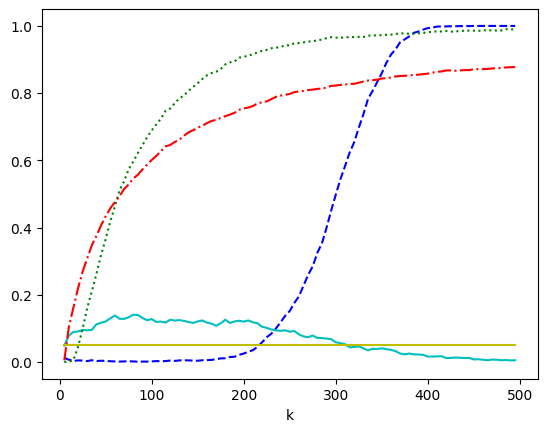} 
\\ {\small $\mathcal{T}(3)$}}
\end{minipage}
\hfill
\begin{minipage}[h]{0.24\linewidth}
\center{\includegraphics[width=1\linewidth]{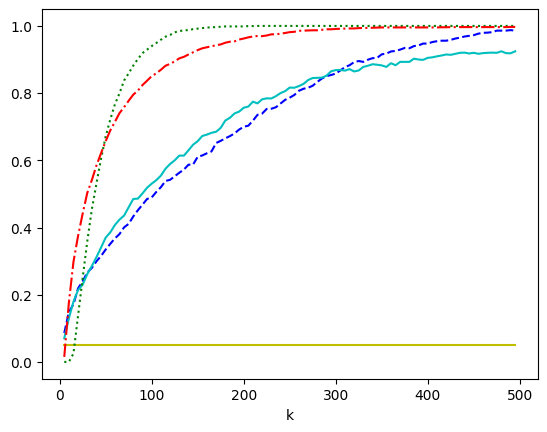} 
\\
{\small $Burr(2, 2)$}}
\end{minipage}
\hfill
\begin{minipage}[h]{0.24\linewidth}
\center{\includegraphics[width=1\linewidth]{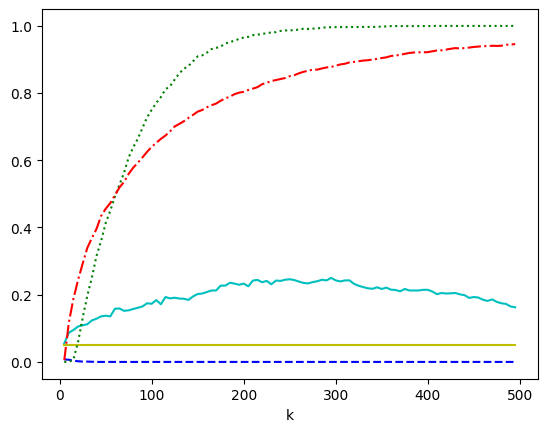} 
\\
{\small $Burr(3, 2)$}}
\end{minipage}
\caption{{\footnotesize Empirical power of the procedures \eqref{test}
 (blue dashed line), \eqref{test-location}
  (cyan solid line), the Hasofer--Wang (green dotted line) and ratio (red dash-dotted line) tests for various distributions, the significance level $\alpha = 0.05$ is indicated by a yellow solid line, $n=2500.$}}
\label{fig:lwrv2}
\end{figure}

In Figs. \ref{fig:lwrv1}–\ref{fig:lwrv2} we plot the empirical type I error probabilities for seven distributions from the Gumbel MDA and empirical power for seven distributions from the Fr\'echet MDA of the scale-free procedure \eqref{test}, location- and scale-free procedure \eqref{test-location}, Hasofer--Wang and ratio tests.

 For all the distributions from the Gumbel MDA, except for $Exp(1)$ and $\mathcal{LW}(3,1),$ the empirical type I error probabilities of the Hasofer--Wang and ratio tests are greater than $\alpha$ for almost all $k$ values. On the other hand, the empirical type I error probabilities of procedure \eqref{test-location}
exceed $\alpha$ only for $\mathcal{LN}(0,1)$ and $\mathcal{LW}(1.5,1).$ However, even for these two distributions, these probabilities are much less than those of the Hasofer--Wang and ratio tests. 
Procedure \eqref{test}
  is conservative for all $k<100$ on all the  distributions considered from the class $\mathcal{LW}$. But one should not forget that this procedure is the only one in our comparison that is not location-free. On the other hand, all the tests and procedures considered show quite good efficiency on the distributions from the class $\mathcal{RV},$ except poor performance of \eqref{test} and \eqref{test-location} on the distributions $\mathcal{T}(3)$ and $Burr(3,2).$

Summarizing the results of this section, we can conclude that the procedure \eqref{test-location}
 outperforms other tests/procedures analyzed in this section in terms of its empirical properties on the distributions from class $\mathcal{LW}$ but might have less power on those of $\mathcal{RV},$ like on $\mathcal{GP}(1/3,1),$ $\mathcal{T}(3),$ and $Burr(3,2).$ However, the high type I error probabilities of Hasofer--Wang and ratio tests do not allow us to consider them adequate for testing the hypothesis $H_0: F\in \mathcal{LW}$ and even $\tilde H_0: F\in \mathcal{D}(EV_0).$
 This means that tests/procedures for distinguishing between $\mathcal{D}(EV_0)$ and $\mathcal{D}(EV_\gamma),$ $\gamma>0,$ based on tail heaviness, might be more efficient than tests based on the properties of the GEV and GP models.

\subsection{Real data example
} \label{realdata}

In this section, the Jackson- and Lewis-type tests from \cite{Goegebeur} and the procedure \eqref{test-location} for distinguishing $\mathcal{W}$ from $\mathcal{LW}$ (see Section \ref{sim1}) are compared on daily precipitation data collected in Green Bay, US and Jena, Germany; the total number of observations in both datasets is $3630.$ The data was taken from the Daily Global Historical Climatology Network (GHCNd), an integrated database of daily climate summaries from land surface stations across the globe, \cite{menne}. The two datasets cover the period
1901--2021, but for stationarity reasons, only days in April were considered, leading to $1331$ and $1693$ non-zero observations in Green Bay and Jena, respectively. According to \cite{serinaldi} (see paragraph [38] there) which analyzes the GHCNd data as we do, there is no evidence of monotonic trend in the data. Moreover, to check for possible clustering of extremes, we estimate the extremal index of these data using the intervals estimator \cite{ferrosegers}. To apply it, we consider a set of thresholds equal to the empirical quantiles of levels from 0.9 to 0.99. For both datasets and all selected thresholds, the values of the intervals estimator lie above the 0.9 level, which confirms that the extremes of both datasets can be considered as asymptotically independent and the application of the Jackson- and Lewis-type tests as well as the procedure \eqref{test-location} to these data is correct.

We focus on the 150 largest observations of these samples. The p-values of the Jackson- and Lewis-type tests and analogs of p-values of the procedure \eqref{test-location} (calculated as $p = 1 - \Phi\big(\sqrt{k}(\widehat{R}_{k,n} - 1)/\sigma(0)\big)$, where $\Phi$ is the standard normal cdf) are shown in Figure \ref{fig:prcp} for $k = 10, \ldots, 150.$ Their quite unstable behavior is because there are many identical observations (the total number of unique non-zero observations is 127 in Green Bay and 172 in Jena; this is explained by the fact that the amount of precipitation in the GHCNd database is given with an accuracy of up to one millimeter).

\begin{figure}
[ht]\begin{minipage}[h]{0.48\linewidth}
\center{\includegraphics[width=1\linewidth]{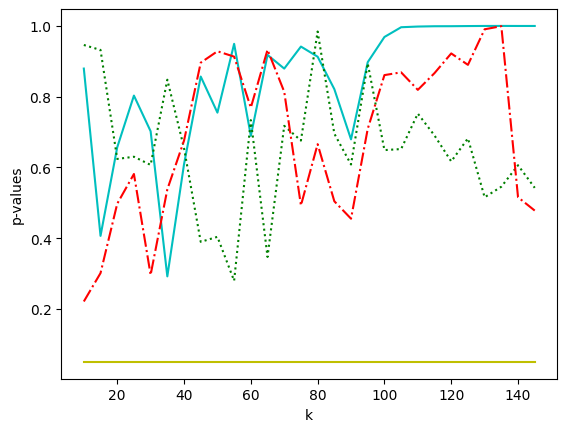}\\{\small Green Bay, US}}\end{minipage}
\hfill
\begin{minipage}[h]{0.48\linewidth}
\center{\includegraphics[width=1\linewidth]{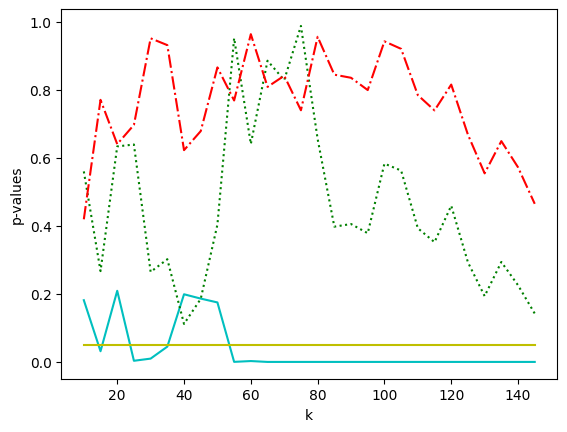}\\{\small Jena, Germany}}
\end{minipage}
\caption{{\footnotesize P-values of the procedure \eqref{test-location}
 (cyan solid line), the Jackson-type (green dotted line) and Lewis-type (red dash-dotted line) tests for daily precipitation data in Green Bay, US (left panel) and Jena, Germany (right panel); the level $\alpha = 0.05$ is indicated by a yellow solid line, $k=10, \ldots, 150.$}}
\label{fig:prcp}
\end{figure}

The left panel of Figure \ref{fig:prcp} suggests that the class $\mathcal{W}$ should be selected as an appropriate tail model for data collected in Green Bay, though traditionally extremes of precipitation data are modelled by extreme value distributions with positive shape parameter, see \cite{koutsoyiannis}. Indeed, p-values of the Jackson- and Lewis-type tests and its analogs of the procedure \eqref{test-location} are higher than the level $\alpha = 0.05$ for all $k$ values shown. 

This conclusion is confirmed by the quantile-quantile plots in Figure \ref{qqplots}. The left panel is a Hill plot, or Pareto quantile plot (\cite{beirlantteugels}, p.102), showing the fit of the largest sample observations by the Pareto distribution (and as a consequence by any regularly-varying distribution), and containing points
\[\left\{\left(\ln\big(\frac{n+1}{j}\big), \ln(X_{(n-j+1)}) \right); \quad j = 1, \ldots, 150\right\}.\] The right panel is a Weibull plot \cite{beirlantbladt}, since it is exactly a quantile-quantile plot for high quantiles of the Weibull distribution $\mathcal{W}(a, \lambda),$ see Section \ref{sim1} for definition. It is defined by points
\[\left\{\left(\ln\Big(\ln\big(\frac{n+1}{j}\big)\Big), \ln(X_{(n-j+1)}) \right); \quad j = 1, \ldots, 150\right\}\] and is inspired by approximation (6) in \cite{girard0}. The central panel is a log-Weibull plot also introduced in \cite{beirlantbladt}, a quantile-quantile
plot for high quantiles of the log-Weibull distribution $\mathcal{LW}(\lambda, c),$ containing points
\[\left\{\left(\ln\Big(\ln\big(\frac{n+1}{j}\big)\Big), \ln\big(\ln(X_{(n-j+1)})) \right); \quad j = 1, \ldots, 150\right\}\]
The quite good fit shown by the Weibull plot can be interpreted as an empirical validation of the Weibull-type behavior of the tail, whereas the Hill and log-Weibull plots are clearly concave.

\begin{figure}[ht]
\begin{minipage}[h]{0.32\linewidth}
\center{\includegraphics[width=1\linewidth]{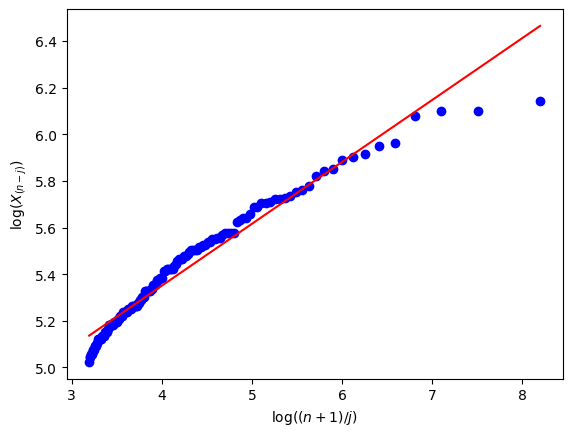}}
\end{minipage}
\hfill
\begin{minipage}[h]{0.32\linewidth}
\center{\includegraphics[width=1\linewidth]{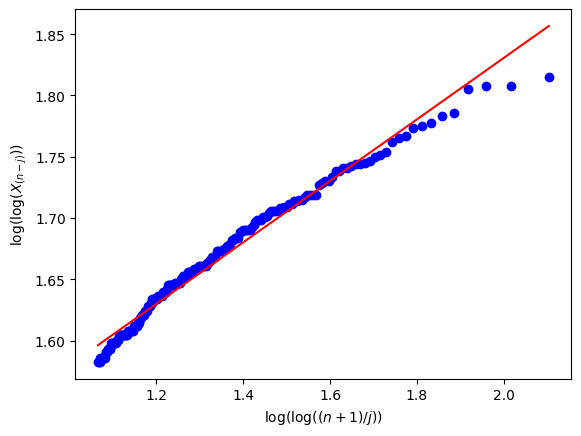}}
\end{minipage}
\hfill
\begin{minipage}[h]{0.32\linewidth}
\center{\includegraphics[width=1\linewidth]{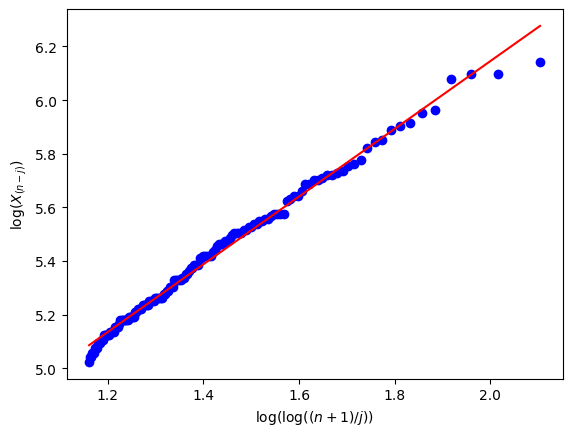}}
\end{minipage}
\caption{{\footnotesize Hill (left), log-Weibull (center) and Weibull (right) plots of the largest $k=150$ daily precipitation observations in Green Bay, US, in April during 1901--2021, $n=1331,$ the red solid lines correspond to the best linear fits to these data.}}
\label{qqplots}
\end{figure}

The right panel of Figure \ref{fig:prcp} is not so univocal as its left one. The Jackson- and Lewis-type tests suggest that the class $\mathcal{W}$ should be selected as a tail model for data collected in Jena. However, analogs of p-values of the procedure \eqref{test-location} occur both below and above the level $\alpha = 0.05;$ moreover, there is a stable interval of $\widehat{R}_{k,n}$ values below $\alpha$ for $k$ from 60 to 150, which suggests that the class $\mathcal{LW}$ (or, maybe, a model with heavier tails) should be selected as a tail model for this data according to Section \ref{kselection}. 

The quantile-quantile plots in Figure \ref{saentis} also do not allow us to draw a clear conclusion about which of the three models is the best. Whereas the Hill plot looks a bit concave, the log-Weibull and Weibull plots show a good fit, that is both the log-Weibull and Weibull tail models look to be well suited to describe the precipitation data collected in Jena. 

\begin{figure}[ht]
\begin{minipage}[h]{0.32\linewidth}
\center{\includegraphics[width=1\linewidth]{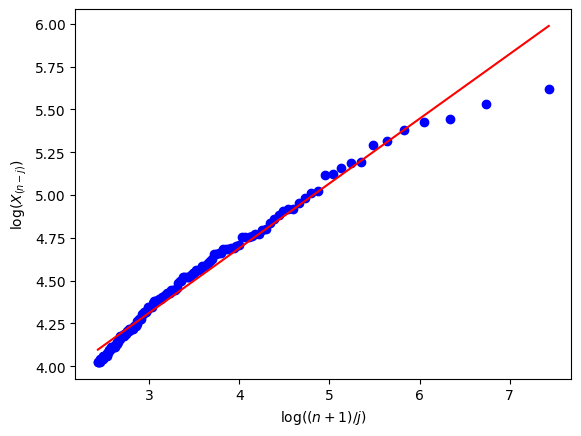}}
\end{minipage}
\hfill
\begin{minipage}[h]{0.32\linewidth}
\center{\includegraphics[width=1\linewidth]{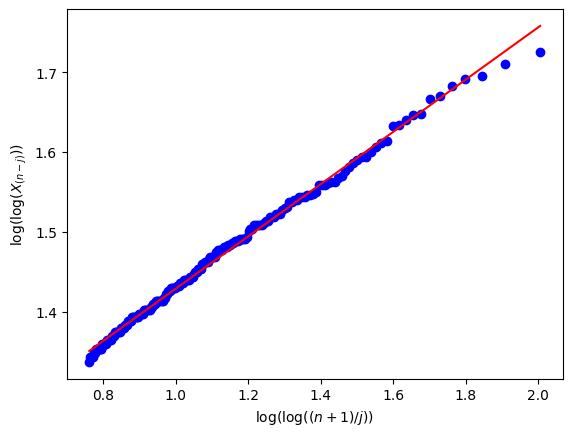}}
\end{minipage}
\hfill
\begin{minipage}[h]{0.32\linewidth}
\center{\includegraphics[width=1\linewidth]{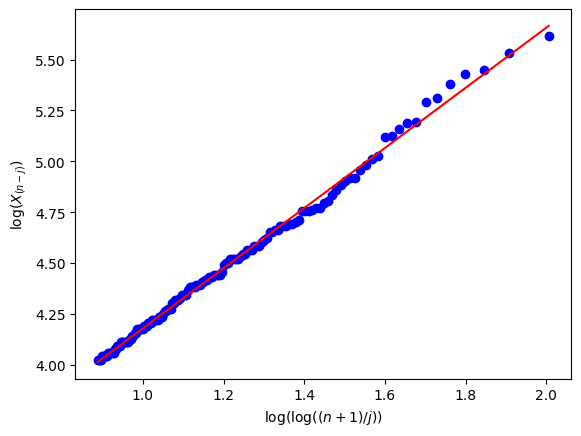}}
\end{minipage}
\caption{{\footnotesize Hill (left), log-Weibull (center) and Weibull (right) plots of the largest $k=150$ daily precipitation observations in Jena, Germany, in April during 1901--2021, $n=1693,$ the red solid lines correspond to the best linear fits to these data.}}
\label{saentis}
\end{figure}

\section{Proofs}\label{proofs}

\subsection{Proof of Proposition \ref{P1}
}

 The continuity of $G$ and infinity of its right endpoint imply that for all $c>1$ there exists some $\tilde c>1, \tilde c = \tilde c(t),$ such that $c = u_G(\tilde c t)/u_G(t).$ From $C_\delta(H,G)$ it follows that
\[u_H(t) \ge u_G(t) \frac{u_H\big(\tilde{c}^{1+\delta}t\big)}{u_G(\tilde ct)}.\] Using this relation, we derive
\begin{eqnarray*}\frac{\big(1 - H(c u_H(t))\big)^{1-\varepsilon}}{1 - G(c u_G(t))} & = & \frac{\left(1 - H\Big(\frac{u_G(\tilde c t)}{u_G(t)} u_H(t)\Big)\right)^{1-\varepsilon}}{1 - G(u_G(\tilde c t))}\\ &\le&
\frac{\left(1 - H\Big(\frac{u_G(\tilde c t)}{u_G(t)} \frac{u_H(\tilde{c}^{1+\delta}t)}{u_G(\tilde ct)} u_G(t)\Big)\right)^{1-\varepsilon}}{1 - G(u_G(\tilde c t))} \\
& = & \frac{\left(1 - H(u_H(\tilde{c}^{1+\delta}t))\right)^{1-\varepsilon}}{1 - G(u_G(\tilde c t))} = \frac{(\tilde{c}^{1+\delta}t)^{-(1-\varepsilon)}}{(\tilde c t)^{-1}} = t^\varepsilon \\ & = & \frac{(1 - H(u_H(t)))^{1-\varepsilon}}{1 - G(u_G(t))},\end{eqnarray*}
whence the proposition follows.

\subsection{Proof of Example \ref{E1}}
First let us show that for all $H\in A_1$ the condition $C_\delta(F_0, H)$ holds for some $\delta>0$. We have $u_{F_0}(t) = \exp\big((\ln\ln t/b)^2\big).$ By its definition \eqref{deflogw}, the function $\ln (1 - H(e^t))$ is regularly varying at infinity with index $1/\theta,$ $\theta>0,$ thus $1 - H(t) = \exp\big(-(\ln t)^{1/\theta}\ell(\ln t)\big), t>1,$ for some slowly varying at infinity function $\ell(t).$ Therefore, by properties of slowly varying functions (see, e.g., \cite{bingham}), there is a slowly varying function $\ell^{\sharp}(t)$ such that $u_{H}(t) = \exp\big((\ln t)^\theta\ell^\sharp(\ln t)\big).$

So, to check the condition $C_\delta(F_0, H),$ we show that for some $\delta>0$ there is a $t_0$ such that for all $t>t_0$ the ratio \[u_{F_0}(c^{1+\delta}t)/u_H(ct) = \exp\big((\ln\ln(c^{1+\delta}t)/b)^2 - (\ln(ct))^\theta \ell^\sharp(\ln (ct)) \big) = \exp(f(c, t))\] does not increase with respect to $c>1.$ For this purpose it is enough to show that the derivative with respect to $c$ of the function in the exponent on the right-hand side of the latter relation is negative for all $t>t_0$ and $c>1.$ Below we assume that the derivative of $\ell^\sharp$ is eventually monotone. If the slowly varying function $\ell(t)$ is differentiable and its derivative is eventually monotone, then \begin{equation}\lim_{t\to\infty} t \ell^\prime(t)/\ell(t) = 0,\label{slowv}\end{equation} see, e.g., Theorem 1.7.2, \cite{bingham}. Thus, for large $t_0$ we have
\begin{eqnarray}\nonumber&&\frac{\partial f(c,t)}{\partial c} = \frac{1}{c}\, \frac{2(1+\delta)}{b^2} \,\frac{\ln\ln(c^{1+\delta}t)}{\ln(c^{1+\delta}t)} - \frac{1}{c}\theta (\ln(ct))^{\theta-1} \ell^\sharp(\ln (ct))(1 + o(1))\\ \nonumber
&&< t \left(\frac{1}{ct} \, \frac{2(1+\delta)}{b^2}\,\frac{\ln\ln(ct)}{\ln(ct)} - \frac{1}{ct}\theta (\ln(ct))^{\theta-1} \ell^\sharp(\ln (ct))(1 + o(1))\right)\\ &&= t\,\tilde f^\prime(ct),\label{deriv}\end{eqnarray}
with \[\tilde f(x) = (1+\delta)((\ln\ln x)/b)^2 - (\ln x)^\theta \ell^\sharp(\ln x).\] It is easy to see that there exists $x_0$ such that for all $x>x_0$ the derivative of $\tilde f(x)$ is negative. Selecting $t_0 = x_0,$ we finally derive by \eqref{deriv} that $\partial f(c,t)/\partial c<0$ for all $t>t_0$ and $c>1.$

Checking the condition $C_0(G, F_0)$ for $G\in A_0,$ where $A_0$ is the class of tails of Weibull-type distributions, is much easier. Indeed, using properties of slowly varying functions and the definition of Weibull-type distributions (see formula (4)), we obtain $u_G(t) = (\ln t)^\theta \ell^\sharp(\ln t)$ for $\theta>0$ and some slowly varying $\ell^\sharp(t).$ Then we need to show that the function
\[u_G(t)/u_{F_0}(t) = \exp\big(\theta\ln\ln t + \ln\ell^\sharp(\ln t) - ((\ln\ln t)/b)^2\big)\] does not increase eventually. Assuming again that the derivative of $\ell^\sharp$ is eventually monotone, we can show this by proving that the derivative of the function in the exponent on the right-hand side of the latter relation is eventually negative. This follows immediately by using \eqref{slowv}.

\subsection{Proof of Theorem \ref{T1}
}
We have
\[\sqrt{k}(\tilde R_{k,n} - 1) = \sqrt{k}(\tilde R_{k,n} - R_{k,n}) + \sqrt{k}(R_{k,n} - 1),\]
where $R_{k,n}$ is defined by \eqref{initialrkn}.
First, $-\ln (1 - F_0(X_{(n-i)})) \stackrel{d}{=} E_{(n-i)},$ $i=0, \ldots, n-1,$ where $E_{(1)}\le \cdots\le E_{(n)}$ denote the order statistics of independent standard exponential random variables $E_1, \ldots, E_n.$ Then, by the R\'enyi representation \cite{renyi}
\begin{equation}R_{k,n} \stackrel{d}{=} \frac{1}{k}\sum_{i=1}^{k} \big(E_{(n-k+i)} - E_{(n-k)} \big)\stackrel{d}{=} \frac{1}{k}\sum_{i=1}^{k} E^\ast_{(i)} = \frac{1}{k}\sum_{i=1}^{k} E^\ast_{i},\label{rknexp}\end{equation} where $\{E^\ast_{(i)}\}_{i=1}^k$ are the order statistics of independent standard exponential random variables $\{E^\ast_i\}_{i=1}^k.$ Thus by the central limit theorem,
\begin{equation}\label{old}
\sqrt{k}(R_{k,n} - 1) \xrightarrow{d} N(0,1),
\end{equation} see also the proof of Theorem 1 in \cite{R1}.

So to complete the proof, it is enough to show that
\[\sqrt{k}(\tilde R_{k,n} - R_{k,n}) \xrightarrow{P} 0.\]
To prove this, we use the Chebyshev inequality for $\sqrt{k}(\tilde R_{k,n} - R_{k,n})$ given $X_{(n-k)} = q$ and the law of total probability. For this purpose, we evaluate the asymptotic behaviour of $\mathbb{E}(\tilde R_{k,n} - R_{k,n} \mid X_{(n-k)})$ and ${\rm Var}(\tilde R_{k,n} - R_{k,n} \mid X_{(n-k)}).$ \\
\\
{\bf Explicit form of $\mathbb{E}(\tilde R_{k,n} - R_{k,n}) \mid X_{(n-k)} = q).$}\\
\\
We have
\begin{eqnarray}
\nonumber\tilde{R}_{k,n} - R_{k,n} &=&
 \left(\ln\frac{k}{n} - \ln(1 - F_0(X_{(n-k)})) \right)\\ \nonumber && - \frac{1}{k}\sum_{i=n-k+1}^{n}\ln\frac{1-F_0(u_0(n/k) X_{(i)}/X_{(n-k)})}{1
- F_0(X_{(i)})}.
\end{eqnarray}
Consider the conditional distribution of $\tilde{R}_{k,n} - R_{k,n}$ given $X_{(n-k)}=q.$ By Lemma 3.4.1 in \cite{dehaan}, the joint conditional distribution
of the set of order statistics $\{X_{(i)}\}_{i=n-k+1}^{n}$ given $X_{(n-k)}=q$ coincides with the (unconditional) joint distribution of the set of order statistics $\{X_{(i)}^{\ast}\}_{i=1}^{k}$ of i.i.d. random variables $\{X_{i}^{\ast}\}_{i=1}^{k}$ with common cdf
\[ F_q(x)=P(X\leq x \mid X>q)= \frac{F_{0}(x)-F_0(q)}{1-F_{0}(q)},\quad x>q.\]
Thus, given $X_{(n-k)}=q$,
\begin{eqnarray}\tilde{R}_{k,n} - R_{k,n} \stackrel{d}{=}
\left(\ln\frac{k}{n} -
\ln(1 - F_0(q))\right) -
\frac{1}{k}\sum_{i=1}^{k}\ln\frac{1-F_0(u_0(n/k) X^\ast_{i}/q)}{1
- F_0(X^\ast_{i})}. \;\; \label{R}\end{eqnarray} Hereafter we write $u_0$ instead of $u_0(n/k)$ for convenience. For $Y^\ast :=
\ln(1-F_0(u_0 X^\ast_{1}/q)) - \ln(1 - F_0(X^\ast_{1})),$ after direct, but tedious calculations, we derive
\begin{equation}\mathbb{E}Y^\ast = \ln\frac{1-F_0(u_0)}{1-F_0(q)} - (f(q) - 1),\label{E}\end{equation}
where \begin{equation}f(q) = f(q;n,k) =
\int_{q}^{\infty}\frac{u_0}{q}\frac{1-F_0(x)}{1-F_0(q)}\frac{p_0(u_0x/q)}{1-F_0(u_0x/q)}dx,\label{f}\end{equation}
and $p_0(t)$ denotes the pdf of $F_0.$
This and \eqref{R} imply that \[\mathbb{E}(\tilde{R}_{k,n} -
R_{k,n}\mid X_{(n-k)} = q) = f(q) - f(u_0) = f(q) - 1.\]
\\
{\bf Asymptotics of $\sqrt{k}\mathbb{E}(\tilde R_{k,n} - R_{k,n}\mid X_{(n-k)}).$}\\
\\
Next, we show that
\begin{equation}\sqrt{k}\mathbb{E}(\tilde R_{k,n} - R_{k,n}\mid X_{(n-k)}) = \sqrt{k}(f(X_{(n-k)}) - f(u_0))\xrightarrow{P}
0.\label{consist}\end{equation} As long as $F_0(u_0(x)) = 1 - 1/x,$
we have $p_0(u_0(x))u_0^\prime(x) = x^{-2}$ and $(xu^\prime_0(x))^{-1} = x
p_0(u_0(x)).$ Lemma 2.2.1 in \cite{dehaan} implies that
\begin{equation}\frac{n}{\sqrt{k}}p_0(u_0)(X_{(n-k)} -
u_0)\stackrel{d}{\rightarrow} N(0,1).\label{delta}\end{equation}
Applying the delta method to \eqref{delta}, we obtain
\begin{equation}\frac{n}{\sqrt{k}}p_0(u_0)\frac{f(X_{(n-k)}) -
f(u_0)}{f^{\prime}(u_0)}\xrightarrow{d} N(0,1).\label{delta1}\end{equation}
This is not the classical delta method, since $u_0$ is not a constant and so \eqref{delta1} should be established. By the mean value theorem, we have $f(X_{(n-k)}) - f(u_0) = f^\prime(c)(X_{(n-k)} - u_0)$ a.s., where $c$ is some (random) value between $X_{(n-k)}$ and $u_0.$ Thus it is enough to show that $f^\prime(c)/f^\prime(u_0) \xrightarrow{P} 1.$ The proof uses the explicit form of $f^\prime$ provided below and is based on the multiple application of Condition \ref{Cond1}
 and \eqref{delta}; it is very technical and is not of special interest. Thus here and in similar situations below we omit such technical details.

Next, calculating directly $f^\prime(q)$ and substituting $q = u_0,$ we derive
\begin{equation*}
f^\prime(u_0)  =  \frac{n}{k}\bigg( p_0(u_0) - \int_{u_0}^\infty\frac{x}{u_0}\frac{p^2_0(x)}{1 -
F_0(x)}dx \bigg).
\end{equation*}
For absolutely continuous distributions with infinite right endpoint belonging to $\mathcal{D}(EV_\gamma),$
\begin{equation}\frac{1 - F_0(u)}{up_0(u)} \to \gamma, \quad u\to\infty,\label{easy}\end{equation}
see, e.g., Theorem 1.1.11 in \cite{dehaan}, for $\gamma>0$ and p. 18 ibid. for $\gamma=0.$
Using the latter, L'H$\hat{\text{o}}$pital's rule and Condition \ref{Cond1},
we get
\begin{eqnarray}\nonumber \lim_{u\to\infty} \frac{u p_0(u)}{\int_u^\infty x \frac{p^2_0(x)}{1 -
F_0(x)}dx}  &=&  -\lim_{u\rightarrow \infty} \frac{up^\prime_0(u) + p_0(u)}{u p^2_0(u)/(1 -
F_0(u))}\\  &=& \label{lopital} -\lim_{u\rightarrow \infty} \frac{p^\prime_0(u)(1-F_0(u))}{(p_0(u))^2} - \lim_{u\rightarrow \infty} \frac{1-F_0(u)}{up_0(u)}\\ &=& 1,\nonumber\end{eqnarray}
whence \[\frac{k}{n p_0(u_0)} f^\prime(u_0) \xrightarrow{P} 0.\] Combining the latter and \eqref{delta1}, we derive \eqref{consist}.\\
\\
{\bf Asymptotics of ${\rm Var}(\tilde R_{k,n} - R_{k,n}\mid X_{(n-k)}).$}
\\
\\
First, we have,
\begin{eqnarray*}{\rm Var}(\tilde{R}_{k,n} - R_{k,n} \mid X_{(n-k)} = q) &=&
\frac{1}{k}{\rm Var}\left(\ln \frac{k}{n} - \ln(1-F_0(q)) -
Y^\ast\right)\\ &=& \frac{1}{k}{\rm Var} (Y^\ast).\end{eqnarray*}
Integrating by parts, we derive for the second moment of $Y^\ast$
\begin{eqnarray*}\mathbb{E}(Y^\ast)^2 = \int_{q}^{\infty}
\left(\ln\frac{1-F_0(u_0x/q)}{1-F_0(x)}\right)^2
d\left(\frac{F_0(x)-F_0(q)}{1-F_0(q)}\right) = \left(\ln \frac{1 -
F_0(q)}{1 - F_0(u_0)}\right)^2 \\ + 2\int_q^\infty
\ln\frac{1-F_0(u_0x/q)}{1-F_0(x)} \left(\frac{p_0(x)}{1 - F_0(x)} -
\frac{u_0}{q}\frac{p_0(u_0x/q)}{1-F_0(u_0x/q)}\right) \frac{1 -
F_0(x)}{1-F_0(q)}dx \\
=:  \left(\ln \frac{1 -
F_0(q)}{1 - F_0(u_0)}\right)^2 + 2 h(q).
\end{eqnarray*}
From the latter and \eqref{E},
\[\mathbb{E}(Y^\ast)^2 - (\mathbb{E}Y^\ast)^2 = 2h(q) - 2 (f(q)-1)\ln \frac{1 -
F_0(q)}{1 - F_0(u_0)} - (f(q)-1)^2 =: V(q).\]
Fix $\delta:$
$0<\delta<1/2.$ Let us show that \begin{equation}\label{asvar} k^{2\delta}V(X_{(n-k)}) \xrightarrow{P} 0.\end{equation} Note that $V(X_{(n-k)}) = k {\rm Var}(\tilde R_{k,n} - R_{k,n}\mid X_{(n-k)}).$
Let $E_1, \ldots, E_n$ be i.i.d. standard exponential variables and $E_{(1)} \le \cdots \le E_{(n)}$ be their order statistics. By continuity of $F_0,$ we get $-\ln(1 - F_0(X_{(n-k)})) \stackrel{d}{=} E_{(n-k)},$ thus
\begin{eqnarray*}
 -\sqrt{k}\ln \left(\frac{1 - F_0(X_{(n-k)})}{1-F_0(u_0)}\right) \stackrel{d}{=} \sqrt{k} \big(E_{(n-k)} - \ln(n/k)\big) \xrightarrow{d} N(0,1),\end{eqnarray*} by Lemma 2.2.1 in \cite{dehaan}.
From the latter, \eqref{consist} and the relation $f(u_0)=1,$ it follows that
\[k^{2\delta} \left(2(f(X_{(n-k)})-1)\ln \frac{1 - F_0(X_{(n-k)})}{1 - F_0(u_0)} +
(f(X_{(n-k)})-1)^2\right) \xrightarrow{P} 0.\] To prove \eqref{asvar} it remains to show that $k^{2\delta}h(X_{(n-k)}) \xrightarrow{P} 0.$ For this purpose we apply the delta method again. In this situation we cannot use the version of the delta method which we applied above since $h^\prime(u_0) = 0,$ so we aim at finding the second derivative of
$h(q)$ and substituting $q=u_0.$ After tedious calculations we obtain \[h^{\prime\prime}(u_0) =
\frac{1}{u_0^2(1 - F_0(u_0))}\int_{u_0}^\infty \frac{x^2p^3_0(x)}{(1 - F_0(x))^2}dx.\]
Note also that $h(u_0) = 0.$ From \eqref{delta} and the relation $1 - F_0(u_0) = k/n$
we obtain \begin{eqnarray}\frac{n^2}{k}
\frac{p_0^2(u_0)(h(X_{(n-k)})-h(u_0))}{2h^{\prime\prime}(u_0)} =
\frac{n p_0^2(u_0) h(X_{(n-k)})}{\int_{u_0}^\infty\frac{2x^2}{u_0^2}\frac{p_0^3(x)}{(1-F_0(x))^2}dx} \xrightarrow{d}
\xi^2,\label{almost_last}\end{eqnarray} where $\xi\sim N(0,1)$ and the asymptotics of the integral in the denominator follows from the L'H$\hat{\text{o}}$pital rule, Condition \ref{Cond1}
 and \eqref{easy},
\begin{eqnarray*}\lim_{u\to\infty}\frac{u^2 p_0^2(u)/(1-F_0(u))}{\int_{u}^\infty 2x^2\frac{p_0^3(x)}{(1-F_0(x))^2}dx} = \frac{1}{2}.\end{eqnarray*}
Substituting the latter into \eqref{almost_last} and using the Slutsky theorem, we derive
\[ k h(X_{(n-k)}) \xrightarrow{d}
\xi^2.\] Therefore, for $\delta\in(0, 1/2)$ it holds $k^{2\delta} h(X_{(n-k)}) \xrightarrow{P} 0,$
that proves \eqref{asvar}.
\\
\\
{\bf Application of the Chebyshev inequality.}
\\
\\
To complete the proof, we must show that
\begin{equation}T_{k,n} :=\;\mid\sqrt{k}(\tilde{R}_{k,n} - R_{k,n}) - \sqrt{k}(f(X_{(n-k)}) -
f(u_0))\mid\; \xrightarrow{P} 0,\label{P}\end{equation} that, together with \eqref{consist}, implies the theorem. By the Chebyshev inequality, for every $\varepsilon>0$ we get
\begin{eqnarray*}P\big( T_{k,n} >\varepsilon \mid X_{(n-k)} = q\big)
\leq \frac{k {\rm Var}(\tilde{R}_{k,n} -
R_{k,n}\mid X_{(n-k)} = q)}{\varepsilon^2} = \frac{V(q)}{\varepsilon^2}.\end{eqnarray*} By the latter and the law of total probability, we finally derive
\begin{eqnarray*}
P\left(T_{k,n} > \varepsilon\right) &=& P\left(T_{k,n} > \varepsilon,\; k^\delta\sqrt{V(X_{(n-k)})} \le  \varepsilon \right)\\ &&+\; P\left(T_{k,n} > \varepsilon,\; k^\delta \sqrt{V(X_{(n-k)})} > \varepsilon \right)\\
&\le& P\left(T_{k,n} > k^\delta \sqrt{V(X_{(n-k)})}\right) + P\left(k^\delta \sqrt{V(X_{(n-k)})} > \varepsilon \right)\\
&=& \int_{\mathbb{R}} P\left(T_{k,n} > k^\delta \sqrt{V(X_{(n-k)})} \mid X_{(n-k)} = q\right) p_{X_{(n-k)}}(q) dq\\&& + P\left(k^\delta \sqrt{V(X_{(n-k)})} > \varepsilon \right) \\
&\le & \int_{\mathbb{R}} k^{-2\delta} p_{X_{(n-k)}}(q) dq + P\left(k^\delta \sqrt{V(X_{(n-k)})} > \varepsilon \right)
\\ &=& k^{-2\delta} + P\left(k^\delta \sqrt{V(X_{(n-k)})} > \varepsilon \right),
\end{eqnarray*}
where $p_{X_{(n-k)}}(q)$ is the pdf of
$X_{(n-k)}.$ The result follows from \eqref{asvar}.

\subsection{Proof of Theorem \ref{T2}
}

To prove Theorem \ref{T2},
we need the following result.

\begin{lemma}\label{L1} Let $X_1^0, \ldots, X_n^0$ be i.i.d. random variables with common cdf $F_0.$ Then under the conditions of Theorem \ref{T2}
\begin{equation}\label{appr}
\frac{1-F_{0}(c X^0_{(n-k)})}{1-F_{0}(X^0_{(n-k)})} - \frac{1-F_{0}(c u_0)}{1-F_{0}(u_0)} = O_P(1/\sqrt{k})
\end{equation} uniformly in $c>1.$
\end{lemma}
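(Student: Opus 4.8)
The plan is to freeze the threshold: set $\phi_c(q):=(1-F_0(cq))/(1-F_0(q))$ and estimate $\phi_c(X^0_{(n-k)})-\phi_c(u_0)$ via the mean value theorem, producing a bound on $\sup_{c>1}|\phi_c'|$ that does not involve $c$. The input on the location of the threshold is \eqref{delta} from the proof of Theorem \ref{T1} (valid for a sample drawn from $F_0$): combined with $1-F_0(u_0)=k/n$ and $p_0(u_0)=r(u_0)(1-F_0(u_0))$, where $r:=p_0/(1-F_0)$ denotes the hazard rate, it gives $X^0_{(n-k)}-u_0=O_P\big(1/(\sqrt k\,r(u_0))\big)$, and in particular $X^0_{(n-k)}/u_0\xrightarrow{P}1$. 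Thus it suffices to prove that, with $\delta_n:=M/(\sqrt k\,r(u_0))$ for a constant $M$, one has $\sup_{c>1}\sup_{|q-u_0|\le\delta_n}|\phi_c'(q)|\le 4 r(u_0)$ for $n$ large; the mean value theorem then gives $\sup_{c>1}|\phi_c(X^0_{(n-k)})-\phi_c(u_0)|\le 4r(u_0)\delta_n=4M/\sqrt k$ on the event $\{|X^0_{(n-k)}-u_0|\le\delta_n\}$, whose probability is $\ge 1-\varepsilon$ once $M=M(\varepsilon)$ is large by \eqref{delta}, which is \eqref{appr}.

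First I would differentiate, $\phi_c'(q)=\phi_c(q)\big(r(q)-c\,r(cq)\big)$, and bound the two summands. Since $c>1$ gives $1-F_0(cq)\le 1-F_0(q)$, we have $\phi_c(q)\le 1$, hence $\phi_c(q)r(q)\le r(q)$. For the other term, $\phi_c(q)\,c\,r(cq)=c\,p_0(cq)/(1-F_0(q))=y\,p_0(y)/(q(1-F_0(q)))$ with $y=cq\ge q$. The crucial structural fact, and where I expect the real work, is that the von Mises condition \eqref{mises} forces $x\mapsto x\,p_0(x)$ to be \emph{eventually decreasing}: writing $(x p_0(x))'=p_0(x)\big(1+x p_0'(x)/p_0(x)\big)$ and $x p_0'(x)/p_0(x)=\big(x r(x)\big)\cdot\big((1-F_0(x))p_0'(x)/(p_0(x))^2\big)$, the second factor tends to $-\gamma-1$ by \eqref{mises} while $x r(x)$ tends to $1/\gamma$ for $\gamma>0$ and to $+\infty$ for $\gamma=0$ by \eqref{easy}, so the bracket is eventually strictly negative. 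Consequently $\sup_{y\ge q}y p_0(y)=q\,p_0(q)$ for $q$ large, whence $\phi_c(q)\,c\,r(cq)\le q p_0(q)/(q(1-F_0(q)))=r(q)$, uniformly in $c>1$. Therefore $|\phi_c'(q)|\le 2r(q)$ for all $q>x_0$ and all $c>1$. (The naive estimate $c\,p_0(cq)\le c\,p_0(q)$, which only uses that $p_0$ is eventually decreasing, is not uniform in $c$; it is the monotonicity of $x p_0(x)$, not of $p_0$ alone, that is needed.)

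It remains to check that $\sup_{|q-u_0|\le\delta_n}r(q)\le 2r(u_0)$ for $n$ large, i.e. that $a(u_0)/a(q)\to 1$ uniformly on this interval, where $a:=1/r=(1-F_0)/p_0$. From \eqref{mises} one gets $a'(x)\to\gamma$, so $|a(q)-a(u_0)|\le\big(\sup_{[u_0-\delta_n,u_0+\delta_n]}|a'|\big)\delta_n=O(\delta_n)$; dividing by $a(u_0)$, which is of order $1/r(u_0)$, and using $u_0 r(u_0)\to 1/\gamma$ for $\gamma>0$ and $u_0 r(u_0)\to+\infty$ for $\gamma=0$ together with $\delta_n r(u_0)\sqrt k=M$, one finds that this ratio tends to $0$; hence $\delta_n/u_0\to 0$, which yields the uniform bound on $r$ and also guarantees that $[u_0-\delta_n,u_0+\delta_n]$ lies beyond $x_0$ (where $F_0\in C^1$) for $n$ large, so that the estimate of the preceding paragraph and the mean value theorem both apply there. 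Combining the three paragraphs gives \eqref{appr}; every estimate except the eventual monotonicity of $x p_0(x)$ is a routine computation with the von Mises condition of the same flavour as those already performed in the proof of Theorem \ref{T1}.
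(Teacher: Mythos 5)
Your proof is correct, and it reaches the same target as the paper's but by a noticeably different technical route. The paper also works with the function $f(x)=(1-F_0(cx))/(1-F_0(x))$, but it invokes the delta method on \eqref{delta} and reduces the claim to showing that $\tfrac{k}{n}f^\prime(u_0)/p_0(u_0)=\tfrac{1-F_0(cu_0)}{1-F_0(u_0)}-c\,p_0(cu_0)/p_0(u_0)$ is $O(1)$ uniformly in $c>1$; the first term is trivially at most $1$ and the second is bounded via Potter-type inequalities (for $\gamma>0$ directly, for $\gamma=0$ via relation (1.1.33) of de Haan and Ferreira). You instead run the mean value theorem over the shrinking window $|q-u_0|\le\delta_n$ with a derivative bound $|\phi_c^\prime(q)|\le 2r(q)$ that is uniform in $c$, obtained from the eventual monotonicity of $x\mapsto xp_0(x)$ (which you correctly derive from \eqref{mises} and \eqref{easy}, covering $\gamma>0$ and $\gamma=0$ in one stroke), plus the continuity estimate $r(q)/r(u_0)\to1$ on the window via $a^\prime\to\gamma$. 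The two arguments hinge on the same key inequality $c\,p_0(cq)\le p_0(q)$ for large $q$, uniformly in $c>1$; you prove it by monotonicity of $xp_0(x)$ where the paper cites Potter bounds. What your version buys is that the uniformity in $c$ and the legitimacy of linearizing around the moving center $u_0=u_0(n/k)$ are made fully explicit (the paper's ``delta method'' with a $c$-dependent normalization $f^\prime(u_0)$ is stated rather tersely on these points); the cost is the extra, but routine, step controlling $r$ on the window. No gaps.
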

\begin{proof}
Applying the delta method for the function $f(x) = (1 - F_0(cx))/(1 - F_0(x))$ to the relation \eqref{delta}, we derive
\[\frac{n}{\sqrt{k}} p_0(u_0)\frac{f(X^0_{(n-k)}) - f(u_0)}{f^\prime(u_0)} \xrightarrow{d} N(0,1).\] Thus to prove \eqref{appr} it is enough to show that
\begin{equation}\frac{\sqrt{k}}{n}\frac{f^\prime(u_0)}{p_0(u_0)} = O(1/\sqrt{k})\label{appr2}\end{equation} uniformly in $c>1.$
As long as
\[f^\prime(u_0) = \left(\frac{1 - F_0(cx)}{1 - F_0(x)}\right)^\prime_{x=u_0} = \frac{p_0(u_0)(1 - F_0(c u_0)) - cp_0(cu_0) (1 - F_0(u_0))}{(1 - F_0(u_0))^2},\]
then using $1 - F_0(u_0) = k/n$ we can simplify the relation \eqref{appr2} to
\begin{equation}\label{fdens} \frac{1 - F_0(c u_0)}{1 - F_0(u_0)} - c\frac{p_0(c u_0)}{p_0(u_0)} = O(1)
\end{equation}
uniformly in $c>1.$ Clear, $\overline{F}_0(cu_0)/\overline{F}_0(u_0) \le 1$ for all $c>1.$ If $F_0\in \mathcal{D}(G_\gamma)$ with $\gamma>0,$ boundness from above of the ratio $cp_0(c u_0)/p_0(u_0)$ follows from Potter's inequality, see, e.g., Proposition B.1.9 in \cite{dehaan}. If $\gamma=0,$ then the required fact follows from formula (1.1.33) ibid., that holds under Condition \ref{Cond1},
 relation $p_0(u_0(t)) u_0^\prime(t) = t^{-2}$ and Potter's inequality again. Hence \eqref{fdens} and \eqref{appr} hold. \end{proof}

Let us show that under the condition $C_\delta(F_0, F_1),$ $\delta>0,$ \[\sqrt{k}(\tilde{R}_{k,n}-1)\gg\sqrt{k}\left(  \frac{1}{k}\sum
_{i=1}^{k}E_{i}-1\right)\] for some i.i.d. random variables $\{E_i\}_{i\ge 1}$ with mean more than 1, and the result follows from the central limit theorem. Here $X\gg Y$ means that $X$ is stochastically larger than $Y.$ The proof remains almost the same if $C_\delta(F_1, F_0)$ holds with $\delta>0$ instead of $C_\delta(F_0, F_1)$.\\
\\
 Let $X_1, \dots, X_n$ be i.i.d. random variables with cdf $F_1.$ Consider the asymptotic behavior of the statistic
$\tilde{R}_{k,n}$ as $n\rightarrow\infty.$ Denote $Y_{i}= \ln
(k/n) - \ln(1-F_0(u_0X^\ast_{i}/q)),$ $i=1,\ldots, k,$ where
$\{X_{i}^{\ast}\}_{i=1}^{k}$ are i.i.d. random variables defined in the same way as in Theorem 2 
 with common cdf
\[
F_{q}^1(x)=\frac{F_{1}(x)-F_{1}(q)}{1-F_{1}(q)},\ \ q<x.
\]
By Lemma 3.4.1 in \cite{dehaan}, the joint distribution of the order statistics $\{Y_{(i)}\}_{i=1}^{k}$
of $\{Y_{j}\}_{i=1}^{k}$ coincides with the conditional joint distribution of the set of random variables
 $\{Z_{j}\}_{j=1}^{k}$ given $X_{(n-k)}=q,$ with
\[
Z_{j}= \ln (k/n) - \ln(1-F_{0}(u_0 X_{(n-j+1)}/X_{(n-k)})),\
j=1,...,k.
\]
Clearly, $\tilde{R}_{k,n}=\frac{1}{k}\sum_{j=1}^{k}Z_{j}.$
Thus, the conditional distribution of $\tilde{R}_{k,n}$ given
$X_{(n-k)}=q$ coincides with the distribution of
$\frac{1}{k}\sum_{i=1}^{k}Y_{i}.$
Therefore,
\begin{eqnarray} \nonumber P(Y_1\leq x) &=& P(\ln(1 - F_0(u_0 X^\ast_1/q))\geq \ln(k/n) - x) \\
&=& P\big(X_1^\ast\leq q u_0^{-1} F_0^\leftarrow(1 - ke^{-x}/n)\big) \label{T2main} \\ &=&
\frac{F_1\big(q u_0^{-1} F_0^\leftarrow(1 - ke^{-x}/n)\big) - F_1(q)}{1 -
F_1(q)}. \nonumber 
\end{eqnarray}

By assumption, the cdfs
$F_{1}$ and $F_{0}$ satisfy either the condition $C_\delta(F_{0},F_{1})$ or
$C_\delta(F_{1},F_{0})$ with $\delta>0$ by assumptions. Assume that $C_\delta(F_{0},F_{1})$ holds with some
 $\delta>0$ and $t_{0},$ if $C_\delta(F_{1},F_{0})$ is satisfied the proof is similar.
If $x^{\ast}=+\infty,$ then $X_{(n-k)}\rightarrow+\infty$ almost surely, so we can deal only with the case $n/k>t_{0}.$ Denote the quantile function of $F_1$ by $u_1(t)$ and write $u_1$ instead of $u_1(n/k).$ By Proposition 1,
\begin{equation}
\frac{1-F_{1}(c u_1)}{1-F_{1}(u_1)}\geq\frac{(1-F_{0}(c u_0))^{1-\varepsilon}%
}{(1-F_{0}(u_0))^{1-\varepsilon}},\ \ c>1,
\label{ineq}\end{equation} for some $\varepsilon>0.$

Denote $r = u_1^\leftarrow(q).$ Note also that if $X_{(n-k)} = u_1(\xi)$ for some random $\xi,$ then  $u_0(\xi) \stackrel{d}{=} X_{(n-k)}^0$ with $\{X_i^0\}_{i=1}^n$ introduced in Lemma \ref{L1}. Let us rewrite \eqref{T2main}, using \eqref{ineq} with $c = u_0^{-1} F_0^\leftarrow(1- ke^{-x}/n)$ and \eqref{appr}. Uniformly in $x>0,$ we have
\begin{eqnarray*}P(Y_1\leq x) & = & 1 - \frac{1 - F_1\big(u_1(r) u_0^{-1} F_0^\leftarrow(1
- ke^{-x}/n)\big)}{1 - F_1(u_1(r))} \\
& \leq &
1 - \frac{\big(1 - F_0(u_0(r) u_0^{-1} F_0^\leftarrow(1 -
ke^{-x}/n))\big)^{1-\varepsilon}}{\big(1 -
F_0(u_0(r))\big)^{1-\varepsilon}}\\
& = & 1 - \left(\frac{1 - F_0(F_0^\leftarrow(1 -
ke^{-x}/n))}{1 -
F_0(u_0)} + O(1/\sqrt{k})\right)^{1-\varepsilon}\\
& = & 1 - \left(e^{-x} + O(1/\sqrt{k})\right)^{1-\varepsilon} =  1 - e^{-(1-\varepsilon) x} + O(1/\sqrt{k}).
\end{eqnarray*}
Since $O(1/\sqrt{k})$ vanishes, for $k$ large enough there exists a cdf $G(x)$ such that
\[ G(x) \ge \min(1 - e^{-(1-\varepsilon) x} + O(1/\sqrt{k}), 1) \] with mean $(1 - \varepsilon_1)^{-1},$  $\varepsilon_1\in (0, \varepsilon),$ and variance $\sigma^2>0.$ Hence, $Y_{1}$ is stochastically larger than a random variable $E$ with cdf
$G.$ Next, let
$E_{1},\ldots,E_{k}$ be i.i.d. random variables with common cdf $G,$ then
\begin{equation}
\sqrt{k}\left(  \frac{1}{k}\sum_{i=1}^{k}Y_{i}-1\right)  \gg
\sqrt{k}\left(  \frac{1}{k}\sum_{i=1}^{k}E_{i}-1\right)
.\label{ll}%
\end{equation}
Provided \eqref{ll} holds for all $q>x_{0},$ 
\begin{equation}
\sqrt{k}(\tilde{R}_{k,n}-1)\gg\sqrt{k}\left(  \frac{1}{k}\sum
_{i=1}^{k}E_{i}-1\right)  .\label{LL}%
\end{equation}
By the central limit theorem,
\[
\sqrt{k}\left(  \frac{1}{k}\sum_{i=1}^{k}%
E_{i}-\frac{1}{1-\varepsilon_1}\right) \xrightarrow{d} N(0,\sigma^2),
\]
so \[\sqrt{k}\left(  \frac{1}{k}\sum_{i=1}^{k}E_{i}-1\right)
\xrightarrow{P}+\infty,\] which with \eqref{LL} completes the proof of
Theorem \ref{T2}.

\subsection{Proof of Corollary \ref{C1}
}

Let us assume that $C_0(F_0, F_1)$ holds; the proof under $C_0(F_1, F_0)$ is similar. Condition $C_0(F_0, F_1)$ implies that
\begin{equation}\label{fromc0}\frac{u_0(ct)}{u_0(t)} \le \frac{u_1(ct)}{u_1(t)}\end{equation} for all $c\ge 1$ and $t\ge t_0.$ As mentioned in the proof of Theorem \ref{T2},
 under the assumptions of Corollary \ref{C1},
 $X^0_1 \stackrel{d}{=} u_0(u_1^\leftarrow(X_{1})) =: V_{1}.$ Hence substituting $t = u_1^{\leftarrow}(X_{(n-k)})$ and $c = u_1^{\leftarrow}(X_{(i)})/u_1^{\leftarrow}(X_{(n-k)})$ in \eqref{fromc0}, we get for all $i\in \{n-k+1, \ldots, n\}$ and $n$ large enough
\begin{equation} \label{v}\frac{V_{(i)}}{V_{(n-k)}} = \frac{u_0(u_1^{\leftarrow}(X_{(i)}))}{u_0(u_1^{\leftarrow}(X_{(n-k)}))}
\le \frac{u_1(u_1^{\leftarrow}(X_{(i)}))}{u_1(u_1^{\leftarrow}(X_{(n-k)}))} = \frac{X_{(i)}}{X_{(n-k)}},
\end{equation} where $V_{(1)} \le \cdots \le V_{(n)}$ are the order statistics of i.i.d. random variables $\{V_i\}_{i=1}^n,$ $V_i = u_0(u_1^\leftarrow(X_{i}))$ for $i\in \{1, \ldots, n\}.$ Since $(V_{(1)}, \ldots, V_{(n)}) \stackrel{d}{=}\\ (X_{(1)}^0, \ldots, X_{(n)}^0),$ it follows from \eqref{v} that
\begin{eqnarray*}\tilde R^0_{k,n} &\stackrel{d}{=}& \ln\frac{k}{n} -
\frac{1}{k}\sum_{i=n-k+1}^{n}\ln(1-F_0(u_0V_{(i)}/V_{(n-k)}))\\ &\le& \ln\frac{k}{n} -
\frac{1}{k}\sum_{i=n-k+1}^{n}\ln(1-F_0(u_0X_{(i)}/X_{(n-k)})) = \tilde R_{k,n}.\end{eqnarray*} The result follows.

\subsection{Proof of Theorem \ref{T3}
}

To prove Theorem \ref{T3},
 we need the following result.

\begin{lemma}\label{L2} Let $E_1, \ldots, E_n$ be i.i.d. standard exponential with $E_{(1)}\le \cdots\le E_{(n)},$ their order statistics. Then
\begin{equation}\label{2exp}
\frac{n}{\sqrt{k}}\left(\left(\begin{array}{c}E_{(k)} \\ E_{(2k)}\end{array}\right) +
\left(\begin{array}{c} \ln(1 - k/n) \\ \ln(1 - 2k/n) \end{array}\right)\right) \xrightarrow{d} N\left(\Big(\begin{array}{c}0 \\ 0\end{array}\Big), \Big(\begin{array}{cc} 1 & 1 \\ 1 & 2 \end{array}\Big)\right),
\end{equation} if $k\to\infty,$ $k/n\to 0$ as $n\to\infty.$
\end{lemma}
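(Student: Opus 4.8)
The plan is to reduce the bivariate limit to two independent scalar central limit theorems by means of the Rényi representation of exponential order statistics. One may realise the whole order-statistics vector as $(E_{(1)},\dots,E_{(n)})\stackrel{d}{=}(S_1,\dots,S_n)$, where
\[ S_j=\sum_{i=1}^{j}\frac{E_i^{\ast}}{n-i+1},\qquad j=1,\dots,n, \]
and $E_1^{\ast},\dots,E_n^{\ast}$ are i.i.d. standard exponential (see e.g. \cite{dehaan}). In particular $(E_{(k)},E_{(2k)})\stackrel{d}{=}(S_k,S_{2k})$, and the two blocks $S_k$ and $S_{2k}-S_k=\sum_{i=k+1}^{2k}E_i^{\ast}/(n-i+1)$ are independent, being built from disjoint subsets of the $E_i^{\ast}$.

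First I would identify the centering and scaling. Here $\mathbb{E}S_j=\sum_{m=n-j+1}^{n}m^{-1}$ and ${\rm Var}\,S_j=\sum_{m=n-j+1}^{n}m^{-2}$. Since $k/n\to0$ (hence $2k/n\to0$), the elementary bounds $j/n^{2}\le\sum_{m=n-j+1}^{n}m^{-2}\le j/(n-j+1)^{2}$ give $\tfrac{n^{2}}{k}{\rm Var}\,S_k\to1$ and $\tfrac{n^{2}}{k}{\rm Var}\,S_{2k}\to2$; together with ${\rm Cov}(S_k,S_{2k})={\rm Var}\,S_k$ (from the independence above) this shows that $\tfrac{n^{2}}{k}$ times the covariance matrix of $(S_k,S_{2k})$ converges to the covariance matrix in \eqref{2exp}. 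Next, comparing the harmonic sum with $\int_{n-j}^{n}x^{-1}\,dx=-\ln(1-j/n)$ yields $0\le-\ln(1-j/n)-\mathbb{E}S_j<1/(n-j)$, whence $\tfrac{n}{\sqrt k}\big(\mathbb{E}S_j+\ln(1-j/n)\big)\to0$ for $j\in\{k,2k\}$; by Slutsky it therefore suffices to prove \eqref{2exp} with $\mathbb{E}S_k,\mathbb{E}S_{2k}$ in place of $-\ln(1-k/n),-\ln(1-2k/n)$.

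Then I would establish $\tfrac{n}{\sqrt k}\big(S_k-\mathbb{E}S_k,\;(S_{2k}-S_k)-\mathbb{E}(S_{2k}-S_k)\big)\xrightarrow{d}N(0,I_2)$, a standard bivariate normal. Because the two coordinates are independent, this splits into two scalar statements, each of the form $\sum_i b_{n,i}(E_i^{\ast}-1)$ with $b_{n,i}=\tfrac{n}{\sqrt k\,(n-i+1)}$, the first over $1\le i\le k$ and the second over $k<i\le2k$. For each, $\sum_i b_{n,i}^{2}\to1$ by the variance computation above, while $\sum_i|b_{n,i}|^{3}\le\tfrac{1}{\sqrt k}\big(n/(n-2k+1)\big)^{3}\to0$; since $\mathbb{E}|E^{\ast}-1|^{3}<\infty$, Lyapunov's condition holds and each sum converges to $N(0,1)$. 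Applying the linear map $(x,y)\mapsto(x,x+y)$ to the product limit then endows $\tfrac{n}{\sqrt k}\big(S_k-\mathbb{E}S_k,\;S_{2k}-\mathbb{E}S_{2k}\big)$ with exactly the limit law in \eqref{2exp} (modulo the centering adjustment), and one more use of Slutsky with the estimate of the previous paragraph completes the proof.

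The argument is essentially routine; the two places demanding care are bookkeeping ones. The first is the comparison of the deterministic centerings $\sum m^{-1}$ and $-\ln(1-j/n)$: this is precisely where $k\to\infty$ (not merely $k/n\to0$) enters, through $\tfrac{n}{\sqrt k\,(n-k)}\sim k^{-1/2}$. The second is verifying the Lyapunov ratio for the triangular array of non-identically distributed summands, which is immediate from the explicit form of the coefficients $b_{n,i}$. Alternatively one could invoke a ready-made bivariate result on intermediate order statistics in the spirit of Lemma 2.2.1 in \cite{dehaan}, but the Rényi route keeps the proof self-contained.
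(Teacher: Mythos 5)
Your proof is correct, and it takes a genuinely different route from the paper's. The paper proves a two-dimensional extension of the Smirnov lemma directly: it writes down the joint density of the uniform order statistics $(U_{(n-k)},U_{(n-2k)})$, shows via Stirling's formula that this density converges pointwise to a bivariate normal density, invokes Scheff\'e's theorem to get weak convergence, and then transfers the result to exponential order statistics through $(E_{(k)},E_{(2k)})\stackrel{d}{=}(-\ln U_{(n-k)},-\ln U_{(n-2k)})$ by a mean-value-theorem version of the delta method. Your R\'enyi-representation argument bypasses all of that: the decomposition into the independent blocks $S_k$ and $S_{2k}-S_k$ delivers the covariance structure $\bigl(\begin{smallmatrix}1&1\\1&2\end{smallmatrix}\bigr)$ essentially for free, the Lyapunov condition is a one-line computation from the explicit coefficients $b_{n,i}$, and no density manipulation or delta method is needed; you also correctly isolate the only delicate point, namely that replacing the harmonic-sum centering $\mathbb{E}S_j$ by $-\ln(1-j/n)$ costs $O(1/(n-j))$, which is $o(\sqrt{k}/n)$ precisely because $k\to\infty$. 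What the paper's approach buys in exchange is that it proves the uniform-order-statistic statement \eqref{2smirnov} as an intermediate result in the exact form of Lemma 2.2.3 of de Haan and Ferreira, which transfers to arbitrary parent distributions by quantile transformation (and is indeed reused in that form in the proof of Theorem \ref{T3}); your argument is tied to the exponential law, though that is all the lemma asserts, and the paper itself invokes the R\'enyi representation elsewhere, so the tool is fully in keeping with the rest of the text.
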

\begin{proof}
The result of the lemma is almost the direct consequence of the following two-dimensional extension of Smirnov's lemma \cite{smirnov}, see also Lemma 2.2.3 in \cite{dehaan}. Let $U_{(1)} \le \cdots \le U_{(n)}$ be order
statistics from a standard uniform sample. Then, if $k\to\infty,$ $k/n\to 0$ as $n\to\infty,$
\begin{equation}\label{2smirnov}\left(\begin{array}{c} \frac{U_{(n-k)} - b_{n,k}}{a_{n,k}}\\ \frac{U_{(n-2k)} - b_{n,2k}}{a_{n,2k}}\end{array} \right)  \xrightarrow{d}  N\left(\Big(\begin{array}{c}0 \\ 0\end{array}\Big), \Big(\begin{array}{cc} 1 & 1/\sqrt{2} \\ 1/\sqrt{2} & 1 \end{array}\Big)\right),\end{equation}
where for $t\in\mathbb{N},$ $t<n,$ $b_{n,t} = (n-t)/(n-1)$ and $ a_{n,t} = \sqrt{b_{n,t}(1 - b_{n,t})/(n-1)}.$
The proof of \eqref{2smirnov} is just a repetition of that of Lemma 2.2.3 in \cite{dehaan}. Firstly, the pdf of the random vector $(U_{(n-k)}, U_{(n-2k)})$ is
\[\frac{n!}{((k-1)!)^2 (n-2k)!}(1 - x)^{k-1} (x-y)^{k-1} y^{n-2k},\] thus the pdf of the vector $\big((U_{(n-k)} - b_{n,k})/a_{n,k},\ (U_{(n-2k)} - b_{n,2k})/a_{n,2k} \big)$ is
\begin{eqnarray*}\frac{n!}{((k-1)!)^2 (n-2k)!}\,a_{n,k}\,a_{n,2k} (1 - b_{n,k})^{k-1} (b_{n,k} - b_{n,2k})^{k-1} (b_{n,2k})^{n-2k} \\ \times\Big(1 - \frac{a_{n,k} x}{1 - b_{n,k}}\Big)^{k-1} \Big(1 + \frac{a_{n,k} x - a_{n,2k} y}{b_{n,k} - b_{n,2k}}\Big)^{k-1}\Big(1 + \frac{a_{n,2k} x}{b_{n,2k}}\Big)^{n-2k}.\end{eqnarray*}
By the Stirling formula one can conclude that the expression on the first line of the latter formula tends to $ (\sqrt{2}\pi)^{-1},$ whereas the expression on the second line tends to
\[\exp( - x^2 +\sqrt{2}xy - y^2).\] Thus, \eqref{2smirnov} follows from the fact that pointwise convergence of the sequence of pdfs implies weak convergence of the probability distributions (Scheffe's theorem).

Next, noticing that $a_{n,k} \sim a_{n,2k}/\sqrt{2} \sim \sqrt{k}/n$ and using the properties of multivariate normal distribution, we have
\begin{equation}\frac{n}{\sqrt{k}}\left(\left(\begin{array}{c}U_{(n-k)} \\ U_{(n-2k)}\end{array}\right) -
\left(\begin{array}{c}\frac{n-k}{n-1} \\ \frac{n-2k}{n-1} \end{array}\right)\right) \xrightarrow{d} N\left(\Big(\begin{array}{c}0 \\ 0\end{array}\Big), \Big(\begin{array}{cc} 1 & 1 \\ 1 & 2 \end{array}\Big)\right).\label{2dim}\end{equation}
Notice that $(E_{(k)}, E_{(2k)})\stackrel{d}{=} (- \ln(U_{(n-k)}), - \ln(U_{(n-2k)}))$ and denote $f(x) =  - \ln x.$ We derive by the mean value theorem
\begin{eqnarray*}  \frac{n}{\sqrt{k}}\left(\left(\begin{array}{c}E_{(k)} \\ E_{(2k)}\end{array}\right) +
\left(\begin{array}{c} \ln\big(\frac{n-k}{n-1}\big) \\ \ln\big(\frac{n-2k}{n-1}\big) \end{array}\right)\right) &\stackrel{d}{=}& \frac{n}{\sqrt{k}}\left(\begin{array}{c}f(U_{(n-k)}) - f\big(\frac{n-k}{n-1}\big) \\ f(U_{(n-2k)}) - f\big(\frac{n-2k}{n-1}\big)\end{array}\right) \\ &=& \frac{n}{\sqrt{k}}\left(\begin{array}{c}f^\prime(c_1)\big(U_{(n-k)} - \frac{n-k}{n-1}\big) \\ f^\prime(c_2)\big(U_{(n-2k)} - \frac{n-2k}{n-1}\big) \end{array}\right)\end{eqnarray*}  where $c_i,$ $i=1,2,$ are
some (random) values between $U_{(n-ik)}$ and $(n-ik)/(n-1),$ $i=1,2,$ respectively. But for $i = 1,2$ $f^\prime(c_i) = - 1/c_i,$ $U_{(n-ik)} \to 1$ in probability and $(n-ik)/(n-1)\to 1,$ thus $f^\prime(c_i) \to - 1$ in probability. This and the Slutsky theorem imply \eqref{2exp}.
\end{proof}

The schemes of the proofs of Theorems \ref{T1}
 and \ref{T3} coincide. However, the proof of the latter is more difficult technically since the statistic $\widehat{R}_{k,n}$ depends on $X_{(n-k)}$ and $X_{(n-2k)}$ together. Moreover, if $\gamma>0$ then $\sqrt{k}(\widehat{R}_{k,n} - R_{k,n})$ no longer tends to $0$ in probability.
\\
\\
{\bf Explicit form of $\mathbb{E}(\widehat R_{k,n} - R_{k,n}\mid X_{(n-k)} = q, X_{(n-2k)} = \hat q).$}\\
\\
Let us write $\sqrt{k}(\widehat{R}_{k,n} - R_{k,n})$ in explicit form
\begin{align*}
 \sqrt{k}(\widehat{R}_{k,n} - R_{k,n}) =
 \sqrt{k}\left(\ln\frac{k}{n} - \ln(1 - F_0(X_{(n-k)})) \right) \hspace{5cm}\\
- \frac{1}{\sqrt{k}}\sum_{i=n-k+1}^{n}\left[\ln\left(1-F_0\Big(u_0 + \frac{X_{(i)} - X_{(n-k)}}{X_{(n-k)} - X_{(n-2k)}}(u_0 - \hat u_0)\Big)\right)- \ln(1
- F_0(X_{(i)}))\right],
\end{align*}
where $\hat u_0 = u_0(n/(2k)),$ and consider its conditional distribution given $X_{(n-k)} = q$ and $X_{(n-2k)} = \hat q.$ Using argument similar to that from the proof of Theorem \ref{T1}
 and inheriting the notation from there, we get
\begin{eqnarray} \nonumber
\sqrt{k}(\widehat{R}_{k,n} - R_{k,n}) \mid X_{(n-k)} = q, X_{(n-2k)} = \hat q  \hspace{6cm} \\ \label{initial}
\stackrel{d}{=}
 \sqrt{k}\left(\ln\frac{k}{n} - \ln(1 - F_0(q)) \right) \hspace{7cm} \\ \nonumber
-\, \frac{1}{\sqrt{k}}\sum_{i=1}^{k}\left[\ln\left(1-F_0\Big(u_0 + \frac{X_{i}^\ast - q}{q - \hat q}(u_0 - \hat u_0)\Big)\right)- \ln(1
- F_0(X_{i}^\ast))\right].
\end{eqnarray}
Hereinafter we write $\overline{F}(x)$ instead of $1 - F(x).$ Write
\[Y^\ast = \ln\overline{F}_0\Big(u_0 + \frac{X_{1}^\ast - q}{q - \hat q}(u_0 - \hat u_0)\Big)- \ln \overline{F}_0(X_{1}^\ast)\] and find its mean. After integrating by parts, we obtain
\begin{eqnarray*}&&\mathbb{E} Y^\ast = \ln \frac{\overline{F}_0(u_0)}{\overline{F}_0(q)}\\&& - \int\limits_{q}^\infty \frac{\overline{F}_0(x)}{\overline{F}_0(q)}
\left(\frac{u_0 - \hat u_0}{q - \hat q}\frac{p_0\big(u_0 + (u_0 - \hat u_0)(x-q)/(q - \hat q)\big)}{\overline{F}_0\big(u_0 + (u_0 - \hat u_0)(x-q)/(q - \hat q)\big)} - \frac{p_0(x)}{\overline{F}_0(x)}\right) dx\\
&& =: \ln \frac{\overline{F}_0(u_0)}{\overline{F}_0(q)} + 1 - f(q, \hat q).\end{eqnarray*}
from which and \eqref{initial} we get
\[\mathbb{E}(\widehat{R}_{k,n} -
R_{k,n} \mid X_{(n-k)} = q, X_{(n-2k)} = \hat q) = f(q, \hat q) - 1.\]
\\
{\bf Evaluating the asymptotics of $\sqrt{k}\mathbb{E}(\tilde R_{k,n} - R_{k,n} \mid X_{(n-k)}, X_{(n-2k)}).$}\\
\\
Now find the asymptotics of \[\sqrt{k} (f(X_{(n-k)}, X_{(n-2k)}) - 1) = \sqrt{k}\mathbb{E}(\tilde R_{k,n} - R_{k,n}\mid X_{(n-k)}, X_{(n-2k)}).\] In contrast to the proof of Theorem \ref{T1},
 we need the multivariate delta method. First of all, observe that
\[(X_{(n-k)}, X_{(n-2k)}) \stackrel{d}{=} \left(u_0\Big(\frac{1}{1 - \exp(-E_{(k)})}\Big), u_0\Big(\frac{1}{1 - \exp(-E_{(2k)})}\Big)\right),\]
where the order statistics $E_{(k)}$ and $E_{(2k)}$ were introduced in Lemma \ref{L2}. Applying the multivariate delta method for the function
\[g(x,y) = f\left(u_0\Big(\frac{1}{1 - e^{-x}}\Big), u_0\Big(\frac{1}{1 - e^{-y}}\Big)\right)\] to the relation \eqref{2exp}, we get
\begin{equation}\label{delta2dim}\frac{\sqrt{k}\big(f(X_{(n-k)}, X_{(n-2k)}) - f(u_0, \hat u_0)\big)}{\frac{k}{n}\sqrt{D}} \xrightarrow{d} N(0,1),\end{equation}
where
\[D = \nabla g \left(\begin{array}{cc} 1 & 1 \\ 1 & 2 \end{array}\right) (\nabla g)^T\] and $\nabla g$ is the gradient of $g(x,y)$ at the point $(-\ln(1 - k/n), -\ln(1 - 2k/n)).$ The correctness of applying the multivariate delta method here is established using the same argumentation as of \eqref{delta1} (in fact, \eqref{delta2dim} can be derived from the two-dimensional version of \eqref{delta}), is very technical and thus omitted. 

Observe that $f(u_0, \hat u_0) = 1.$ Thus the asymptotics of $\sqrt{k}\mathbb{E}(\tilde R_{k,n} - R_{k,n} \mid X_{(n-k)}, X_{(n-2k)})$ strongly depend on those of $\frac{k}{n}\sqrt{D}.$ One can show that
\begin{eqnarray}\frac{k}{n}\sqrt{D} \to \frac{1}{\sqrt{2}(2^\gamma -1)}\frac{\gamma}{\gamma+1}, \label{notgreat} \end{eqnarray}
where for $\gamma = 0$ the right-hand side should be understood as $(\sqrt{2}\ln 2)^{-1}.$ The proof of \eqref{notgreat} is technical and includes multiple use of the properties of regularly-varying functions and L'H$\hat{\text{o}}$pital's rule; it is given in the Supplementary material. \\
\\
Summarizing the above, we proved that under the assumptions of the theorem, \begin{equation}
\sqrt{k}\big(f(X_{(n-k)}, X_{(n-2k)})  - 1\big) \stackrel{d}{\rightarrow} N\Big(0,
1 + \frac{\gamma^2}{2(\gamma+1)^2(2^\gamma-1)^2}\Big),
\label{strange}\end{equation}
where for $\gamma=0$ the ratio on the right-hand side should be understood as $1/(2 (\ln 2)^2).$
\\
\\
{\bf Final steps}
\\
\\
The proof of the relation
\begin{equation}\label{2P}\sqrt{k}\big(\widehat{R}_{k,n} - R_{k,n}\big) - \sqrt{k}\big(f(X_{(n-k)}, X_{(n-2k)})  - 1\big) \xrightarrow{P} 0\end{equation} does not contain new ideas as compared to the proof of the similar relation \eqref{P} and the previous steps of the current proof, so we omit it. Finally, we have
\begin{eqnarray*}\sqrt{k}(\widehat{R}_{k,n} - 1) &=& \sqrt{k}\big({R}_{k,n} - 1) \\
&& +\, \sqrt{k}\Big[\big(\widehat{R}_{k,n} - R_{k,n}\big) - \big(f(X_{(n-k)}, X_{(n-2k)})  - 1\big)\Big]\\
&& +\, \sqrt{k}\big(f(X_{(n-k)}, X_{(n-2k)})  - 1\big).
\end{eqnarray*}
Observe that the properties of iid exponential random variables imply that the third summand on the right-hand side of the latter relation is independent of the first. Indeed, recall that by \eqref{rknexp}
\[{R}_{k,n} \stackrel{d}{=} \frac{1}{k} \sum_{i=0}^{k-1} (E_{(n-i)} - E_{(n-k)})\]
and $-\ln (1 - F_0(X_{(i)})) \stackrel{d}{=} E_{(i)},$ $i = 1, \ldots, n,$ with $\{E_i\}_{i=1}^n$ i.i.d. standard exponential.
 Next, it follows from the R\'enyi representation \cite{renyi} that $E_{(n-k)}$ and $E_{(n-2k)}$ are independent of $\{E_{(n-i)} - E_{(n-k)}\}_{i=0}^{k-1}.$ Hence, $\ln (1 - F_0(X_{(n-k)}))$ and $\ln (1 - F_0(X_{(n-2k)}))$ are independent of $R_{k,n},$ and the required statement follows from the fact that \[(X_{(n-k)},X_{(n-2k)}) = \Big(g(-\ln (1 - F_0(X_{(n-k)}))), g(-\ln (1 - F_0(X_{(n-2k)})))\Big)\] a.s. with $g(x) = u_0\big((1 - e^{-x})^{-1}\big).$
This argument together with \eqref{old}, \eqref{strange} and \eqref{2P} completes the proof of Theorem \ref{T3}.

\subsection{Proof of Theorem \ref{T4}
}

The scheme of the proof of Theorem \ref{T4}
 is the same as for Theorem \ref{T2},
 so it is omitted. Note only, that for the proof of the relation similar to \eqref{appr2}, one needs to use the multivariate delta method and Lemma \ref{L2}. Next, instead of the relation \eqref{ineq} used to find the upper bound for $P(Y_1\le x)$, one should use the relation
\[\frac{\overline{F}_1(u_1(t) + x(u_1(t) - u_1(t/2)))}{\overline{F}_1(u_1(t))} \ge \frac{\big(\overline{F}_0(u_0(t) + x(u_0(t) - u_0(t/2)))\big)^{1-\varepsilon}}{(\overline{F}_0(u_0(t)))^{1-\varepsilon}}\] for all $x>0.$ The latter follows from the condition $C_\delta(F_0, F_1).$
Indeed,
\begin{eqnarray*} \frac{\overline{F}_1(u_1(t) + x(u_1(t) - u_1(t/2)))}{\overline{F}_1(u_1(t))} &=& \frac{\overline{F}_1\big(u_1(t)(1 + x - xu_1(t/2)/u_1(t))\big)}{\overline{F}_1(u_1(t))} \\
&\ge& \frac{\overline{F}_1\big(u_1(t)(1 + x - xu_0(t/2)/u_0(t))\big)}{\overline{F}_1(u_1(t))}\\ &\ge& \frac{\big(\overline{F}_0(u_0(t) + x(u_0(t) - u_0(t/2)))\big)^{1-\varepsilon}}{(\overline{F}_0(u_0(t)))^{1-\varepsilon}},
\end{eqnarray*}
where the first and second relations follow from $C_0(F_0, F_1)$ and Proposition 1, respectively.

\subsection{Proof of Corollary \ref{C2}
}
The proof of Corollary \ref{C2}
 is like that of Corollary \ref{C1}
  except that instead of the relation \eqref{v} one should use the inequality
\[\frac{V_{(i)} - V_{(n-k)}}{V_{(n-k)} - V_{(n-2k)}} \le \frac{X_{(i)} - X_{(n-k)}}{X_{(n-k)} - X_{(n-2k)}},
\] that immediately follows from $C_0(F_0, F_1)$ and \eqref{v}.

\backmatter



\section*{Declarations}
\textbf{Acknowledgements.}\\
The author wants to sincerely thank Anthony C. Davison for his valuable comments and advice, which made the manuscript much better.
\\
\\
\textbf{Conflict of interest statement.}\\
The author has no conflict of interest to declare.\\
\\
\textbf{Data availability statement.}\\ The datasets analysed during the current study are available on the site of Daily Global Historical Climatology Network, https://www.ncei.noaa.gov/data/global-historical-climatology-network-daily/ .




\newpage
\setcounter{page}{1}
\pagenumbering{roman}
\thispagestyle{empty}
\medskip
\centerline{\large Supplementary Material:}
\medskip
\centerline{\large Location- and scale-free procedures for
distinguishing}\smallskip \centerline{\large {between distribution tail
models}}
\medskip
\centerline{\normalsize by Igor Rodionov}

\subsection{{ The proof of the relation \eqref{notgreat}}} \label{appendix}

{\bf Evaluating the asymptotic of $\frac{k}{n}\sqrt{D}$ for $\gamma>0.$}\\
\\
 First, observe that
\[\nabla g = \left(f^\prime_x(u_0, \hat u_0) u_0^\prime(n/k) \frac{1 - k/n}{(k/n)^2}, \, f^\prime_y(u_0, \hat u_0) u_0^\prime(n/(2k)) \frac{1 - 2k/n}{(2k/n)^2} \right).\] Next, it can be proved that
\begin{eqnarray*}f^\prime_x(u_0, \hat u_0) = \frac{n}{k}\left(p_0(u_0) - \int\limits_{u_0}^\infty\frac{x - \hat u_0}{u_0 - \hat u_0}\frac{p_0^2(x)}{\overline{F}_0(x)}dx\right),
\\ f^\prime_y(u_0, \hat u_0) = \frac{n}{k}\int\limits_{u_0}^\infty\frac{x - u_0}{u_0 - \hat u_0}\frac{p_0^2(x)}{\overline{F}_0(x)}dx.\end{eqnarray*} Now assume $\gamma>0.$ Using the relation \eqref{lopital} established above and the equality
\begin{equation} \label{lopital2}
\lim_{u\to\infty}\frac{p_0(u)}{\int_q^\infty\frac{p_0^2(x)}{\overline{F}_0(x)}dx} = \gamma + 1,\end{equation} that can be obtained similarly, we derive that
\begin{eqnarray*}f^\prime_x(u_0, \hat u_0) = -\frac{n}{k}\frac{\hat u_0}{u_0 - \hat u_0}\frac{\gamma}{\gamma+1} p_0(u_0)(1 + o(1)), \\ f^\prime_y(u_0, \hat u_0) = \frac{n}{k}\frac{u_0}{u_0 - \hat u_0}\frac{\gamma}{\gamma+1} p_0(u_0)(1 + o(1)).\end{eqnarray*} Taking into account $p_0(u_0(t)) u_0^\prime(t) = 1/t^2,$ for $\gamma>0$ we have
\begin{eqnarray*}\frac{k}{n}\sqrt{D} &=& \frac{u_0}{u_0 - \hat u_0} \frac{\gamma}{\gamma+1}\sqrt{\Big(\frac{\hat u_0}{u_0}\Big)^2 - \frac{1}{2}\frac{\hat u_0}{u_0} \frac{u_0^\prime(n/(2k))}{u_0^\prime(n/k)} + \frac{1}{8}\Big(\frac{u_0^\prime(n/(2k))}{u_0^\prime(n/k)}\Big)^2}(1 + o(1))\\ &\to& \frac{1}{\sqrt{2}(2^\gamma -1)}\frac{\gamma}{\gamma+1}\end{eqnarray*} by properties of regularly varying functions (namely, we use here the relations (1.1.33) and (1.2.18), \cite{dehaan}).
\\
\\
{\bf Evaluating the asymptotic of $\frac{k}{n}\sqrt{D}$ for $\gamma=0.$}\\
\\
Consideration of the case $\gamma = 0$ is a bit more delicate. First, note that
\[f^\prime_x(u_0, \hat u_0) + f^\prime_y(u_0, \hat u_0) = \frac{n}{k}\left(p_0(u_0) - \int\limits_{u_0}^\infty\frac{p_0^2(x)}{\overline{F}_0(x)}dx\right) = o\Big(\frac{n}{k} p_0(u_0)\Big)\]
by \eqref{lopital2}. Next, using the latter and again the relations $p_0(u_0(t)) u_0^\prime(t) = 1/t^2$ and (1.1.33), \cite{dehaan}, we derive
\[\frac{k}{n}\nabla g = \frac{f^\prime_y(u_0, \hat u_0)}{p_0(u_0)\cdot n/k}\big(-1 + o(1), \, 1/2 + o(1)\big),\] thus
\[\frac{k}{n}\sqrt{D} = \frac{k}{n}\frac{f^\prime_y(u_0, \hat u_0)}{\sqrt{2}p_0(u_0)}(1 + o(1)).\] Find the asymptotics of the right-hand side of the latter equality. By Corollary 1.1.10, \cite{dehaan},
\[\frac{u_0 - \hat u_0}{n/k\cdot u^\prime_0(n/k)} \to \ln 2,\] thus, using $p_0(u_0) u_0^\prime(n/k) = (k/n)^2$ and $\overline{F_0}(u_0) = k/n,$ we have
\begin{eqnarray*}\frac{k}{n}\frac{f^\prime_y(u_0, \hat u_0)}{p_0(u_0)} &=& \frac{1}{p_0(u_0)} \int\limits_{u_0}^\infty\frac{x - u_0}{u_0 - \hat u_0}\frac{p_0^2(x)}{\overline{F}_0(x)}dx\\ &=& \frac{n/k\cdot u^\prime_0(n/k)}{u_0 - \hat u_0} \cdot \frac{1}{\overline{F_0}(u_0)} \int\limits_{u_0}^\infty(x - u_0)\frac{p_0^2(x)}{\overline{F}_0(x)}dx.\end{eqnarray*} The first multiplier on the right-hand side of the latter relation tends to $(\ln 2)^{-1},$ hence, it remains to find the asymptotics of the second one. By the L'H$\hat{\text{o}}$pital's rule, we have
\begin{eqnarray*}&& \lim_{u\to\infty}\frac{1}{\overline{F_0}(u)} \int\limits_{u}^\infty(x - u)\frac{p_0^2(x)}{\overline{F}_0(x)}dx \\&& =  \lim_{u\to\infty} \frac{1}{- p_0(u)} \left(-\frac{u p^2(u)}{\overline{F}(u)} + \frac{u p_0^2(u)}{\overline{F}_0(u)} - \int_{u}^\infty \frac{p^2_0(x)}{\overline{F}_0(x)} dx \right) \\ && =  \lim_{u\to\infty} \frac{1}{p_0(u)} \int_{u}^\infty \frac{p^2_0(x)}{\overline{F}_0(x)} dx  = 1,\end{eqnarray*} where the latter equality follows from \eqref{lopital2}. Thus, we show that if $\gamma = 0$ then $k \sqrt{D}/n \to (\sqrt{2} \ln 2)^{-1}.$

\end{document}